\theoremstyle{plain}
\newtheorem{dfn}[subsection]{Definition}
\newtheorem{thm}[subsection]{Theorem}
\newtheorem{cor}[subsection]{Corollary}
\newtheorem{conj}[subsection]{Conjecture}
\newtheorem{prp}[subsection]{Proposition}
\newtheorem{lem}[subsection]{Lemma}
\theoremstyle{definition}
\newtheorem*{ac}{Acknowledgements}
\newcommand{\rarrow}{\longrightarrow}
\newcommand{\DS}{\displaystyle}
\newcommand{\powser}[1]{\llbracket #1 \rrbracket}
\renewcommand{\hat}[1]{\widehat{#1}}
\renewcommand{\tilde}[1]{\widetilde{#1}}
\newcommand{\bb}[1]{\mathbb{#1}}
\newcommand{\fr}[1]{\mathfrak{#1}}
\newcommand{\cal}[1]{\mathcal{#1}}
\newcommand{\scr}[1]{\mathscr{#1}}
\DeclareMathOperator{\Hom}{Hom}
\DeclareMathOperator{\Aut}{Aut}
\DeclareMathOperator{\Ker}{Ker}
\let\Im\relax
\DeclareMathOperator{\Im}{Im}
\DeclareMathOperator{\Coker}{Coker}
\DeclareMathOperator{\Gal}{Gal}
\DeclareMathOperator{\Sel}{Sel}
\DeclareMathOperator{\Z}{\mathbb{Z}}
\DeclareMathOperator{\Q}{\mathbb{Q}}
\DeclareMathOperator{\F}{\mathbb{F}}
\DeclareMathOperator{\ch}{char}
\DeclareMathOperator{\Fitt}{Fitt}
\title[]{On the Mazur--Tate refined conjecture for the anticyclotomic tower at inert primes}
\author{Ryota Shii}
\address{Graduate School for Mathematics, Kyushu University, Motooka 744, Nishi-ku Fukuoka 819-0395, Japan}
\email{shii.ryota@gmail.com}
\date{\today}
\keywords{}
\subjclass[2020]{11R23}
\begin{document}
\begin{abstract}
  Let $E$ be an elliptic curve defined over $\Q$ with supersingular reduction at $p \geq 5$, and $K$ be an imaginary quadratic field such that $p$ is inert in $K/\Q$.
  In this paper, we prove the analogous of the ``weak'' Mazur--Tate refined conjecture for an anticyclotomic tower over $K$ using the result by A. Burungale--K. B\"{u}y\"{u}kboduk--A. Lei.
\end{abstract}

\maketitle

\section{Introduction}
\noindent
The Birch and Swinnerton-Dyer (BSD) conjecture for an elliptic curve $E$ defined over $\Q$ states the mysterious relation between the critical value of the Hasse--Weil $L$-function for $E$ and the various arithmetic invariants of $E$.
In \cite{Maz-Tat87}, B. Mazur and J. Tate refined this conjecture including information on the Galois action of a finite abelian extension over $\Q$. 
At the same time, Iwasawa theory is one of the strong tools for understanding such a relation between an arithmetic object and an analytic object.
The Mazur--Tate's formulation can be regarded as the refined Iwasawa theory, for example, \cite{Kur02}, \cite{Kim-Kur21}, \cite{Ota18}, etc.
More precisely, the Mazur--Tate ``weak'' refined conjecture claims that the Mazur--Tate element that interpolates the $L$-values of $E$ twisted by Dirichlet characters with the conductor $S$ belongs to the Fitting ideal of the Selmer group of $E$ over the cyclotomic fields $\Q(\zeta_{S})$.
In \cite{Kim-Kur21}, C.-H. Kim and M. Kurihara proved this conjecture for a $p$-power $S$ from cyclotomic Iwasawa main conjecture of $E$ with good reduction at $p$ under other mild assumption.
In this paper, we consider the analogous of the Mazur--Tate conjecture for the anticyclotomic tower of an imaginary quadratic field $K$.
The anticyclotomic Iwasawa theory for the split case is relatively developed, so the analogous result has been already considered by H. Darmon, C.-H. Kim, etc.
However, we focus on the one for the \textit{inert} case.

Let $E$ be an elliptic curve defined over $\Q$ with the conductor $N_{E}$, and $p \geq 5$ be a prime number such that $E$ has good supersingular reduction at $p$.
Let $K$ be an imaginary quadratic field such that the discriminant $D_{K}$ is prime to $pN$.
Assume that $p$ is inert in $K/\Q$ and
\begin{center}
  (cp) the class number $h_{K}$ is not divisible by $p$.
\end{center}
We write $N_{E}=N_{E}^{+}N_{E}^{-}$ where $N_{E}^{+}$ is divisible only by the split primes in $K/\Q$ and $N_{E}^{-}$ is divisible only by the inert primes in $K/\Q$.
Through this paper, suppose that
\begin{center}
  (def) $N_{E}^{-}$ is the square-free product of an odd number of primes. 
\end{center}
The assumption (def) implies that the root number of $E$ over $K$ is $+1$.
Let $K_{\infty}$ denote the anticyclotomic $\Z_{p}$-extension over $K$ and $K_{n}$ the subfield of $K_{\infty}$ with its degree $[K_{n}:K]=p^{n}$ for any positive integer $n$.
We define the commutative group ring $\Lambda_{n} \coloneq \Z_{p}[\Gal(K_{n}/K)]$.
Under the assumption (def), M. Bertolini and H. Darmon defined the Bertolini--Darmon element $\cal{L}_{f, n}$ in $\Lambda$ including information of $L$-values of $E$, and $L_{p}(E/K_{n}) \coloneq \cal{L}_{f, n} \cdot \iota(\cal{L}_{f, n})$.
Here, $f$ is the newform of weight 2 corresponding to the isogeny class of $E$, and $\iota$ is the involution in $\Lambda_{n}$ sending $\gamma \longmapsto \gamma^{-1}$.
From the view of the statement of Iwasawa main conjecture, C.-H. Kim states the anticyclotomic analogy of the ``weak'' Mazur--Tate refined conjecture in \cite{Kim18+}.
\begin{conj}[``weak'' main conjecture]
  $L_{p}(E/K_{n}) \in \Fitt_{\Lambda_{n}} \left( \Sel_{p^{\infty}}(E/K_{n})^{\vee} \right)$.
  Here, for a $\Lambda$-module $M$, $\Fitt_{\Lambda_{n}}(M)$ is the Fitting ideal on $\Lambda_{n}$ for $M$, and $M^{\vee}$ is the Pontryagin dual of $M$.
\end{conj}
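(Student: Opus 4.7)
The plan is to deduce this finite-level ``weak'' conjecture from the infinite-level anticyclotomic Iwasawa main conjecture of Burungale--B\"uy\"ukboduk--Lei (BBL) via a controlled descent, in the spirit of the Kim--Kurihara argument in the cyclotomic setting. The key input from BBL at an inert supersingular prime is a Kobayashi-type decomposition of the local cohomology into signed subgroups, producing two signed Selmer groups whose Pontryagin duals $X_\infty^{\sharp}, X_\infty^{\flat}$ are finitely generated torsion $\Lambda$-modules, and two signed anticyclotomic $p$-adic $L$-functions $\cal{L}_f^{\sharp}, \cal{L}_f^{\flat}$. The main conjecture identifies $\text{char}_\Lambda(X_\infty^{\sharp/\flat}) = (\cal{L}_f^{\sharp/\flat})$, and the Bertolini--Darmon element $\cal{L}_{f,n}$ at each finite level factors through these signed objects so that $L_p(E/K_n) = \cal{L}_{f,n} \cdot \iota(\cal{L}_{f,n})$ is, modulo $\omega_n$, an explicit product of the images of $\cal{L}_f^{\sharp}$ and $\cal{L}_f^{\flat}$.

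Next I would set up the descent. Assumption (cp) that $p \nmid h_{K}$ makes $\Gamma = \Gal(K_\infty/K)$ identify with the full anticyclotomic Iwasawa line, so $\Lambda_n \cong \Lambda/\omega_n \Lambda$ for the usual $\omega_n = \gamma^{p^n}-1$. A Mazur-style control argument, combined with (def) (which forces global sign $+1$ and guarantees non-triviality of the BD element), should produce an exact sequence relating $\Sel_{p^\infty}(E/K_n)^\vee$ to the $\omega_n^{\sharp/\flat}$-coinvariants of $X_\infty^{\sharp}$ and $X_\infty^{\flat}$, where $\omega_n^{\sharp/\flat}$ are the distinguished polynomials governing the signed decomposition at the inert prime $p$.

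Finally I would pass from characteristic ideals to Fitting ideals. The elementary tool is: if $f$ generates $\text{char}_\Lambda(M)$ for a finitely generated torsion $\Lambda$-module $M$, then the image of $f$ in $\Lambda_n$ lies in $\Fitt_{\Lambda_n}(M/\omega_n M)$ provided the latter is finite. Applied separately to $X_\infty^{\sharp}$ and $X_\infty^{\flat}$, multiplied together, and combined with the factorization of $L_p(E/K_n)$ from the first paragraph and the descent exact sequence from the second, this should yield the desired inclusion $L_p(E/K_n) \in \Fitt_{\Lambda_n}(\Sel_{p^\infty}(E/K_n)^\vee)$.

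The main obstacle is the control step. Because the signed local conditions at the inert prime $p$ disagree with the honest Bloch--Kato Selmer condition that defines $\Sel_{p^\infty}(E/K_n)$, I expect to have to control explicitly the kernel and cokernel of the localization map at $p$ and check that they are compatible with the Fitting ideal operation---ideally that they contribute trivially or produce a unit. This is precisely where the explicit structure of the signed Coleman maps of BBL, the factorization of the BD element through them, and assumption (cp) (used to rule out unwanted contributions from the class group at the inert place) must be combined carefully. A secondary subtlety is the well-definedness of the symmetric combination $L_p(E/K_n)$ as an element of $\Lambda_n$, which is guaranteed by the interplay of $\iota$ with the signed decomposition and should be made explicit before invoking the Fitting-ideal translation.
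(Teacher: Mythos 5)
Your overall architecture matches the paper's: deduce the finite-level statement from the signed infinite-level main conjecture of Burungale--B\"uy\"ukboduk--Lei via control, then translate characteristic ideals into Fitting ideals. You also correctly identify the discrepancy between the signed local condition and the honest Selmer condition as the crux. However, there are three concrete gaps.

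First, the factorization of $L_{p}(E/K_{n})$ is not what you claim. You write that $L_{p}(E/K_{n}) = \cal{L}_{f,n} \cdot \iota(\cal{L}_{f,n})$ is, modulo $\omega_{n}$, a product of the images of \emph{both} signed $p$-adic $L$-functions. This is false. By the Darmon--Iovita structure of the Bertolini--Darmon elements (Proposition \ref{prp:Ln properties}), at a fixed finite level the element $\cal{L}_{f,n}$ factors through only \emph{one} sign, depending on the parity of $n$: for $n$ odd, $\cal{L}_{f,n} = \tilde\omega_{n}^{+} L_{n}^{-}$, so $L_{p}(E/K_{n}) = (\omega_{n}^{+})^{2} \cal{L}_{n}^{-} \iota(\cal{L}_{n}^{-})$. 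Multiplying the two signed descent statements together produces a factor like $(\omega_{n}^{+})^{2}(\omega_{n}^{-})^{2}$, which has no relation to $L_{p}(E/K_{n})$. The paper works with a single sign throughout (eq.\ \eqref{eq:conclusion}) and never invokes the other.

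Second, your ``elementary tool'' for passing from characteristic ideal to Fitting ideal is not quite true as stated. If $(f) = \ch_{\Lambda}(M)$ for a torsion $\Lambda$-module $M$, the image of $f$ need not land in $\Fitt_{\Lambda_{n}}(M/\omega_{n}^{\pm}M)$ in general; you need $\Fitt_{\Lambda}(M) = \ch_{\Lambda}(M)$ first, and that requires $M$ to have no nonzero finite submodule. The paper supplies this via Proposition \ref{prp:Lambda-submod} (the no-finite-index-submodule result from \cite{shi+}) and \cite[Lemma A.7]{Kim-Kur21}, giving \eqref{eq:Fitt vs char}. This is a nontrivial arithmetic input, not a formal maneuver.

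Third, the passage from the signed Selmer group back to the honest one is not merely a ``compatibility'' to be checked: it is the heart of the descent. The paper exhibits the exact sequence
\begin{align}
  \left( \frac{\hat{E}(K_{n, p}) \otimes (\Q_{p}/\Z_{p})}{\hat{E}(K_{n, p})^{-} \otimes (\Q_{p}/\Z_{p})} \right)^{\vee} \rarrow \Sel_{p^{\infty}}(E/K_{n})^{\vee} \rarrow \Sel_{p^{\infty}}^{-}(E/K_{n})^{\vee} \rarrow 0,
\end{align}
computes the left-hand module to be $(\Lambda_{n}/\omega_{n}^{+}\Lambda_{n})^{\oplus 2}$ (using Theorem \ref{thm:decomposition} and Proposition \ref{prp:n-th structure}, which rest on the resolved Rubin conjecture), and multiplies Fitting ideals to obtain $(\omega_{n}^{+})^{2} \cdot \Fitt_{\Lambda_{n}}(\Sel^{-}_{p^{\infty}}(E/K_{n})^{\vee}) \subset \Fitt_{\Lambda_{n}}(\Sel_{p^{\infty}}(E/K_{n})^{\vee})$. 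The factor $(\omega_{n}^{+})^{2}$ appearing here \emph{exactly} matches the factor in the factorization of $L_{p}(E/K_{n})$ above; this numerical coincidence is what closes the argument, and it is missing from your proposal.
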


Under the hypothesis (def), C.-H. Kim \cite{Kim18+} proved the ``weak'' main conjecture for an elliptic curve defined over $\Q$ with ordinary reduction at $p$ under mild assumptions.
Furthermore, C.-H. Kim \cite{Kim24+} also proved one for an elliptic curve defined over $\Q$ with supersingular reduction at $p$ if $p$ is split in an imaginary quadratic field.
The following theorem is the analogous result to \cite{Kim24+} for the inert case.

\begin{figure}[H]
  \centering
  {
    \renewcommand{\arraystretch}{1.25}
    \begin{tabular}{|c||c|c|}
      \hline
      & ordinary at $p$ & supersingular at $p$ \\ \hline
      $p$ is split in $K/\Q$ & \multirow{2}{*}{C.-H. Kim \cite{Kim18+}} & C.-H. Kim \cite{Kim24+} \\
      $p$ is inert in $K/\Q$ & & Our setting \\ \hline
    \end{tabular}
  }
  \caption{Previous research for the definite type}
\end{figure}

\begin{thm}\label{thm:mthm}
  Assume (cp), (def), and the following:
  \begin{itemize}
    \item[(Im)] the mod $p$ Galois representation $G_{\Q} \rarrow \Aut_{\F_{p}}(E[p])$ is surjective if $p=5$ and irreducible if $p> 5$,
    \item[(ram)] for any prime $\ell \mid N^{-}_{E}$ with $\ell^{2} \equiv 1 \bmod p$, the action of inertia subgroup at $\ell$ on $E[p]$ is non-trivial.  
  \end{itemize} 
  Then, we obtain $L_{p}(E/K_{n}) \in \Fitt_{\Lambda_{n}}\left( \Sel_{p^{\infty}}(E/K_{n})^{\vee} \right)$.
\end{thm}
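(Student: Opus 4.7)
The strategy is to deduce the finite-level Fitting ideal containment from the infinite-level anticyclotomic Iwasawa main conjecture of Burungale--B\"uy\"ukboduk--Lei, via a control theorem for the Selmer group over the tower $K_{\infty}/K$ followed by standard Fitting ideal manipulations. The involution $\iota$ appearing in $L_{p}(E/K_{n}) = \mathcal{L}_{f,n} \cdot \iota(\mathcal{L}_{f,n})$ reflects a Poitou--Tate self-duality of the Selmer group, so the natural infinite-level input should already involve this ``squared'' element, and the descent to level $n$ should then recover exactly $L_{p}(E/K_{n})$.

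First, I invoke the main conjecture of Burungale--B\"uy\"ukboduk--Lei under hypotheses (Im) and (ram) to obtain a divisibility of the form
\begin{equation*}
  \bigl(\mathcal{L}_{f} \cdot \iota(\mathcal{L}_{f})\bigr) \subseteq \mathrm{char}_{\Lambda}\bigl(\Sel_{p^{\infty}}(E/K_{\infty})^{\vee}\bigr),
\end{equation*}
possibly after translating between signed (``plus/minus'' type) Selmer groups and signed $p$-adic $L$-functions on the one side and the classical Selmer group on the other. Second, using that $p$ is inert and that (cp) rules out extra $p$-torsion contributions from the class group of $K_{n}$, I set up a control theorem comparing $\Sel_{p^{\infty}}(E/K_{n})^{\vee}$ with the quotient $\Sel_{p^{\infty}}(E/K_{\infty})^{\vee} \otimes_{\Lambda} \Lambda_{n}$. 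Third, using that Fitting ideals contain characteristic ideals of torsion modules and behave well under surjective quotients and base change, I combine these inputs to obtain $L_{p}(E/K_{n}) \in \Fitt_{\Lambda_{n}}(\Sel_{p^{\infty}}(E/K_{n})^{\vee})$.

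\textbf{Main obstacle.} The central difficulty lies in the control theorem in the inert supersingular setting. Unlike the ordinary case (where the formal group has height one and a simple descent suffices) or the split supersingular case of \cite{Kim24+} (where Kobayashi-style plus/minus Selmer groups apply directly), the inert case forces one to work with a Lubin--Tate formal group over an unramified quadratic extension of $\Q_{p}$ as in the BBL framework. Correctly matching the resulting signed main conjecture to the classical integral element $L_{p}(E/K_{n}) \in \Lambda_{n}$---in particular tracking the local terms at the unique prime above $p$ in $K_{\infty}$ and at primes dividing $N_{E}^{-}$ (whose contributions are controlled by (ram))---is the main technical step, as any mismatch in auxiliary factors would weaken the inclusion or introduce spurious denominators.
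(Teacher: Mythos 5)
Your high-level plan is in the right spirit (BBL main conjecture plus descent plus Fitting-ideal bookkeeping), but there is a genuine gap at the very heart of the argument. You propose to run a control theorem directly for the \emph{classical} Selmer group, comparing $\Sel_{p^{\infty}}(E/K_{n})^{\vee}$ with $\Sel_{p^{\infty}}(E/K_{\infty})^{\vee}\otimes_{\Lambda}\Lambda_{n}$. In the supersingular (and a fortiori inert) setting this cannot work: $\Sel_{p^{\infty}}(E/K_{\infty})$ is \emph{not} $\Lambda$-cotorsion, so its Fitting ideal vanishes and no nontrivial containment at level $n$ can be deduced from the infinite level in this way. The paper instead runs the control theorem for the \emph{signed} Selmer groups $\Sel^{\pm}_{p^{\infty}}$ (Proposition \ref{prp:control thm}, which relies on the Rubin conjecture \cite{BKO21} via Lemma \ref{lem:lemma of control}), for which cotorsion (Theorem \ref{thm:cotorsion}) and vanishing of finite submodules (Proposition \ref{prp:Lambda-submod}, giving $\Fitt_{\Lambda}=\ch_{\Lambda}$) hold. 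This yields Proposition \ref{prp:conclusion}: $\left(\omega_{n}^{\mp}\cdot L_{p}^{\pm}(E/K_{\infty})\bmod\omega_{n}\right)\subset(\omega_{n}^{\mp})\cdot\Fitt_{\Lambda_{n}}\bigl(\Sel^{\pm}_{p^{\infty}}(E/K_{n})^{\vee}\bigr)$.

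The second missing piece is the mechanism you wave at with ``translating between signed and classical.'' This is done at \emph{finite} level $n$, not at infinite level: one writes the exact sequence
\begin{align}
  \bigl( \hat{E}(K_{n,p})^{+}\otimes(\Q_{p}/\Z_{p}) \bigr)^{\vee} \rarrow \Sel_{p^{\infty}}(E/K_{n})^{\vee} \rarrow \Sel^{-}_{p^{\infty}}(E/K_{n})^{\vee} \rarrow 0,
\end{align}
computes via Theorem \ref{thm:decomposition} and Proposition \ref{prp:n-th structure} that the left term has Fitting ideal $(\omega_{n}^{+})^{2}$ (the exponent $2$ reflecting the rank-two structure of $\Lambda_{\cal{O}_{p}}$ over $\Lambda$ in the inert case), and then verifies the precise identity $\bigl((\omega_{n}^{+})^{2}L_{p}^{-}(E/K_{\infty})\bmod\omega_{n}\bigr)=\bigl(L_{p}(E/K_{n})\bigr)$ from Proposition \ref{prp:Ln properties}. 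Without this explicit local computation and factorization you cannot hope to land on $L_{p}(E/K_{n})$ exactly, which is precisely the ``auxiliary factor'' danger you yourself flag; the factor $(\omega_{n}^{+})^{2}$ is what makes the two sides match. One more small slip: you write that Fitting ideals contain characteristic ideals; the general containment goes the other way ($\Fitt_{\Lambda}\subseteq\ch_{\Lambda}$), so the passage from the main conjecture ($\subseteq\ch$) to a Fitting containment requires the equality $\Fitt=\ch$ coming from Proposition \ref{prp:Lambda-submod}, which you have not invoked.
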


The proof of Theorem \ref{thm:mthm} is the similar arguments to \cite{Kim-Kur21} and \cite{Kim24+}.
In other words, we obtain Theorem \ref{thm:mthm} from a part of the plus/minus anticyclotomic Iwasawa main conjecture for $E$ and the algebraic structure of the plus/minus Selmer group.
Since we treat the inert case, we use the Rubin conjecture resolved by A. Burungale, S. Kobayashi, and K. Ota in \cite{BKO21} for the local condition of the plus/minus Selmer group.

As an arithmetic consequence, Theorem \ref{thm:mthm} implies the ``weak vanishing'' conjecture by \cite[Proposition 3 in Chapter 1]{Maz-Tat87}.
Let $\chi:\Gal(K_{n}/K) \rarrow \overline{\Q}_{p}^{\times}$ be a character and we extend it to $\Lambda_{n}$ linearly.
We write $I_{\chi}$ for the augmentation ideal $\Ker(\chi:\Lambda_{n} \rarrow \overline{\Q}_{p})$ at $\chi$.
Let $(E(K_{n}) \otimes \overline{\Q}_{p})^{\chi}$ be the $\chi$-isotypic subspace of $E(K_{n}) \otimes \overline{\Q}_{p}$.

\begin{cor}
  Under the hypothesis in Theorem \ref{thm:mthm}, we have $L_{p}(E/K_{n}) \in I_{\chi}^{r_{\chi}}/I_{\chi}^{r_{\chi}+1}$, where $r_{\chi} \coloneq \dim_{\overline{\Q}_{p}} (E(K_{n}) \otimes \overline{\Q}_{p})^{\chi} $.
\end{cor}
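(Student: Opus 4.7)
The plan is to derive the corollary from Theorem \ref{thm:mthm} by combining it with a standard algebraic lemma relating Fitting ideals to augmentation ideals, in the spirit of \cite{Maz-Tat87}.

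First, Theorem \ref{thm:mthm} places $L_{p}(E/K_{n})$ inside the Fitting ideal $\Fitt_{\Lambda_{n}}(\Sel_{p^{\infty}}(E/K_{n})^{\vee})$, which is an ideal of $\Lambda_{n}$. Next, I would invoke the following general algebraic lemma: for any finitely generated $\Lambda_{n}$-module $M$ and any character $\chi$ of $\Gal(K_{n}/K)$ with augmentation ideal $I_{\chi} = \Ker\chi$,
\begin{equation}
\Fitt_{\Lambda_{n}}(M) \subseteq I_{\chi}^{d_{\chi}}, \qquad d_{\chi} := \dim_{\overline{\Q}_{p}}\bigl((M \otimes_{\Z_{p}} \overline{\Q}_{p})^{\chi}\bigr).
\end{equation}
The proof is elementary linear algebra: after base change to $\overline{\Q}_{p}$ the group ring decomposes character-wise, and a presentation matrix of $M$ loses rank at least $d_{\chi}$ when specialised via $\chi$, forcing the generating minors of the Fitting ideal to vanish to order at least $d_{\chi}$ in $I_{\chi}$.

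Finally, I would bound $d_{\chi}$ from below for the module $M = \Sel_{p^{\infty}}(E/K_{n})^{\vee}$. The Kummer-theoretic short exact sequence relating $E(K_{n}) \otimes \Q_{p}/\Z_{p}$, the Selmer group, and the $p$-part of the Tate--Shafarevich group of $E/K_{n}$ yields, after Pontryagin dualisation, a $\Lambda_{n}$-equivariant surjection $\Sel_{p^{\infty}}(E/K_{n})^{\vee} \twoheadrightarrow \Hom_{\Z_{p}}(E(K_{n}), \Z_{p})$. Picking off $\chi$-isotypic components after tensoring with $\overline{\Q}_{p}$, and using that Pontryagin duality twists the Galois action by inversion, the $\chi$-isotypic subspace of the dual Selmer group has dimension at least $r_{\chi^{-1}}$. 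Since $L_{p}(E/K_{n}) = \cal{L}_{f,n}\,\iota(\cal{L}_{f,n})$ is $\iota$-invariant and $\iota$ swaps $I_{\chi}$ with $I_{\chi^{-1}}$, the orders of vanishing at $\chi$ and at $\chi^{-1}$ coincide, whence $L_{p}(E/K_{n}) \in I_{\chi}^{r_{\chi}}$, and the image in $I_{\chi}^{r_{\chi}}/I_{\chi}^{r_{\chi}+1}$ is then well defined.

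The main subtlety is the $\chi$ versus $\chi^{-1}$ bookkeeping forced by Pontryagin duality; once the convention is fixed, the derivation is essentially formal. A secondary point is to ensure that the algebraic lemma above applies in this generality, which requires only that $\Sel_{p^{\infty}}(E/K_{n})^{\vee}$ be finitely generated as a $\Lambda_{n}$-module — this is automatic since it is already finitely generated over $\Z_{p}$.
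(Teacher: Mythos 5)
Your proposal follows precisely the route the paper intends: the corollary is deduced from Theorem \ref{thm:mthm} by the algebraic argument of Mazur--Tate, namely \cite[Proposition 3, Chapter 1]{Maz-Tat87}, which the paper cites in the sentence preceding the corollary but does not reprove. Your ``general algebraic lemma'' is exactly that proposition, and the lower bound $d_\chi \geq r_\chi$ obtained from the Kummer-theoretic surjection $\Sel_{p^\infty}(E/K_n)^\vee \twoheadrightarrow \bigl(E(K_n)\otimes \Q_p/\Z_p\bigr)^\vee$ is the correct input. Two minor remarks. First, the $\chi$ versus $\chi^{-1}$ bookkeeping you flag as ``the main subtlety'' is in fact moot: since $-1 \in (\Z/p^n\Z)^\times$, the characters $\chi$ and $\chi^{-1}$ are $\Gal(\overline{\Q}_p/\Q_p)$-conjugate, so $r_\chi = r_{\chi^{-1}}$ automatically and the $\iota$-invariance of $L_p(E/K_n)$ need not be invoked. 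Second, your one-sentence justification of the lemma (``minors vanish to order $d_\chi$ in $I_\chi$'') is more delicate than it looks: $I_\chi$ is a \emph{minimal} prime of $\Lambda_n$, so localizing at $I_\chi$ gives a field and loses all the information, and $I_\chi^{d_\chi}$ is generally not $I_\chi$-primary; one cannot argue by a naive localization. Since this is exactly the content of Mazur--Tate's Proposition 3, it is cleaner to cite it, as the paper does, rather than sketch a proof.
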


\begin{ac}
  The author is grateful to Shinichi Kobayashi for reading carefully his manuscript.
  He is thankful to Chan-Ho Kim for suggesting this problem and answering some questions.
  He would like to thank Takenori Kataoka for the discussion.
  The author was supported by JST SPRING, Grant Number JPMJSP2136.
\end{ac}

\section{Bertolini--Darmon elements}\label{sect:L-func}
\noindent
Let $p \geq 5$ be a prime number and $K$ an imaginary quadratic field where $p$ is inert.
In this section, we recall the definition of the Bertolini--Darmon elements of $E$ over the anticyclotomic $\Z_{p}$-extension of $K$. 
In details, see \cite{Ber-Dar05} or \cite{Dar-Iov08}.
As for the formulation using an automorphic representation, see \cite{BHKO24+}.

Fix a newform $f = \sum_{n=1}^{\infty} a_{n}(f)q^{n}$ of weight 2 and level $N$ whose Fourier coefficients lie in $\Q$.
Assume $p \nmid N$ and $a_{p}(f)=0$. 
Let $N$ decompose to $N^{+}$ and $N^{-}$ such that a prime divisor of $N_{E}^{+}$ (resp. $N_{E}^{-}$) is split (resp. inert) in $K/\Q$ respectively.
We assume that $N^{-}$ is square-free and the number of prime factors of $N^{-}$ is odd.  

Let $B$ be the definite quaternion algebra of discriminant $N^{-}\infty$, and let $R$ be an Eichler $\Z[1/p]$-order of conductor $N^{+}$ in $B$.
By the assumption $p \nmid N^{-}$, we may fix an isomorphism 
\begin{align}
  i_{p}:B_{p} \coloneq B \otimes \Q_{p} \rarrow M_{2}(\Q_{p}).
\end{align}
Let $\cal{T}$ denote the Bruhat--Tits tree of $B_{p}^{\times}/\Q_{p}^{\times}$ attached to $\mathrm{PGL}_{2}(\Q_{p})$.
We write $\cal{V}(\cal{T})$ and $\vec{\cal{E}}(\cal{T})$ as the set of vertices and ordered edges of $\cal{T}$, respectively.
Let $\Gamma \coloneq R^{\times}/\Z[1/p]^{\times}$. 
The group $\mathrm{GL}_{2}(\Q_{p})$ acts naturally on $\cal{T}$ and the quotient of $\cal{T}$ by the action of $\Gamma$.
For a ring $Z$, a $Z$-valued function $h$ on $\vec{\cal{E}}(\cal{T})$ is a \textit{$Z$-valued modular form} on $\cal{T}/\Gamma$ with weight 2 if $h$ satisfies $h(\gamma e)=h(e)$ for all $\gamma \in \Gamma$.
Let $S_{2}(\cal{T}/\Gamma, Z)$ be the set of such modular forms. 
Like the classical theory, modular forms on $\cal{T}/\Gamma$ have many nice properties, such as Hecke operators, oldforms, newforms, etc.
(In details, see \cite[Section 1.1]{Ber-Dar05}.)

The following implies that a newform $f$ can be identified with an element $h \in S_{2}(\cal{T}/\Gamma, \Z)$.

\begin{prp}[Jacquet--Langrands correspondence] \label{prp:J-L corr}
  For a newform $f \in S_{2}(\Gamma_{0}(N))$, there exists $h \in S_{2}(\cal{T}/\Gamma, \Z)$ such that it shares the same eigenvalues as $h$ for the Hecke operators $T_{\ell}$ ($\ell \nmid N$).
  This form $h$ is unique up to multiplication by a non-zero scalar.
\end{prp}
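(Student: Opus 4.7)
The plan is to reduce the proposition to the classical Jacquet--Langlands correspondence via an adelic reinterpretation of $S_{2}(\cal{T}/\Gamma,\Z)$. First I would identify $\Gamma$-invariant $\Z$-valued functions on $\vec{\cal{E}}(\cal{T})$ with $\Z$-valued functions on the finite double coset space $B^{\times} \backslash \hat{B}^{\times}/\hat{R}^{\times}$, using strong approximation for $B^{\times}$ away from $p$ together with the local identification $\vec{\cal{E}}(\cal{T}) \simeq \mathrm{GL}_{2}(\Q_{p})/\Q_{p}^{\times}\cdot I$, where $I$ is an Iwahori subgroup of $\mathrm{GL}_{2}(\Z_{p})$. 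This presents $S_{2}(\cal{T}/\Gamma, \Z)$ as the space of algebraic modular forms on the definite quaternion algebra $B$ of level $\hat{R}^{\times}$ and trivial coefficient system.

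Next I would invoke Jacquet--Langlands. Since $B$ ramifies exactly at the places dividing $N^{-}\infty$, and since $f$ has weight $2$, trivial nebentype, and level $N=N^{+}N^{-}$ with $N^{-}$ square-free, the cuspidal automorphic representation $\pi_{f}$ of $\mathrm{GL}_{2}(\A_{\Q})$ attached to $f$ is discrete series at $\infty$ and a special (Steinberg twist) representation at each $\ell \mid N^{-}$. These are precisely the hypotheses under which Jacquet--Langlands produces a cuspidal automorphic representation $\pi_{f}^{\mathrm{JL}}$ of $B^{\times}(\A_{\Q})$ such that $\pi_{f,v}^{\mathrm{JL}} \simeq \pi_{f,v}$ at every place $v$ not dividing $N^{-}\infty$. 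I would then take $h$ to be a non-zero vector in the $\hat{R}^{\times}$-fixed subspace (with the Iwahori condition imposed at $p$); this subspace is one-dimensional by local newform/Atkin--Lehner theory, and can be rescaled into the $\Z$-lattice of functions on the finite double coset because the Hecke eigenvalues of $\pi_{f}^{\mathrm{JL}}$ coincide with those of $f$ and hence lie in $\Z$.

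Hecke compatibility is then automatic at each $\ell \nmid N$, since $T_{\ell}$ on both sides is defined by the same local double coset in $\mathrm{GL}_{2}(\Q_{\ell})$ and acts via the same eigenvalue on the corresponding spherical vector. Uniqueness of $h$ up to a non-zero scalar follows from strong multiplicity one for $\mathrm{GL}_{2}$, transferred via Jacquet--Langlands to the quaternion side.

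The main obstacle will be the careful matching at $p$: because $R$ is a $\Z[1/p]$-order, the prime $p$ is treated as an auxiliary ``indefinite'' place where the level structure is Iwahori rather than maximal, which is the reason the combinatorial object is the set of oriented edges $\vec{\cal{E}}(\cal{T})$ rather than the set of vertices. Setting up the precise dictionary between Iwahori-level forms on $B^{\times}$ and $\Gamma$-invariant edge functions, with the correct sign and involution conventions distinguishing oriented from unoriented edges, is the bookkeeping step that requires care; modulo this standard local translation, carried out in detail in \cite{Ber-Dar05}, the proposition follows directly from Jacquet--Langlands combined with multiplicity one.
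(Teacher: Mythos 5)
Your proposal correctly reconstructs the argument behind the citation: the paper's own ``proof'' of Proposition \ref{prp:J-L corr} is simply a reference to \cite[Proposition 1.3]{Ber-Dar05}, and your adelic reinterpretation of $S_{2}(\cal{T}/\Gamma,\Z)$ as algebraic modular forms on $B^{\times}$, followed by the classical Jacquet--Langlands transfer, the one-dimensionality of the Iwahori-level newvector, and strong multiplicity one for uniqueness, is precisely the route taken there. This is the same approach; no discrepancy.
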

\begin{proof}
  See \cite[Proposition 1.3]{Ber-Dar05}.
\end{proof}

From now, $f$ is identified with $h$ corresponding by Proposition \ref{prp:J-L corr}.
Under the above preparations, we define the Bertolini--Darmon elements for $h$.
For simplicity, we assume that the class number of the maximal $\Z[1/p]$-order $\cal{O}_{K}[1/p]$ in $K$ is equal to 1 by the end of this section.
However, a similar result is known in the general case for adelic arguments.
(See \cite{Ber-Dar05}.)
Fix an embedding $\Psi:K \rarrow B$ such that $\Psi(K) \cap R = \Psi(\cal{O}_{K}[1/p])$, that is $\Psi^{-1}(R) = \cal{O}_{K}[1/p]$.
Such an embedding exists uniquely by the definition of $N^{+}$ and $N^{-}$ and the assumption of the class number of $\cal{O}_{K}[1/p]$.
Let $K_{p}$ be the completion field of $K$ at $p$. 
The embedding $K_{p}^{\times} \rarrow B_{p}^{\times}$ induced by $\Psi$ yields an action of $\tilde{G}_{\infty} \coloneq K_{p}^{\times}/\Q_{p}^{\times}$ on $\cal{T}$, denoted by 
\begin{align}
  \sigma \star x \coloneq (i_{p} \circ \Psi)(\sigma)x
\end{align} 
for any $\sigma \in \tilde{G}_{\infty}$ and any vertex or edge $x$ of $\cal{T}$.
Note that $\tilde{G}_{\infty}$ is a compact group because $p$ is inert in $K/\Q$.
Then, there exists $v_{0} \in \cal{V}(\cal{T})$ such that we have $\sigma \star v_{0} = v_{0}$ for all $\sigma \in \tilde{G}_{\infty}$.
Let $U_{0} = \tilde{G}_{\infty} \supset U_{1} \supset U_{2} \supset  \cdots \supset U_{n} \supset \cdots$ be the natural decreasing filtration of the group $U_{0}$ by subgroups of index $(p+1)p^{n-1}$ as in \cite[(2.3)]{Dar-Iov08}.
Fix a sequence $v_{1}, \ldots, v_{n}, \ldots$ of vertices of $\cal{T}$ such that $v_{i}$ and $v_{i+1}$ are connected for all $i \geq 0$.
Let $\tilde{G}_{n} \coloneq \tilde{G}_{\infty}/U_{n}$.
Then, we define $h_{K, n}:\tilde{G}_{n} \rarrow \Z_{p}$ as $h_{K,n}(\sigma) \coloneq h(\sigma \star v_{n})$ and
\begin{align}
  \tilde{\cal{L}}_{h,m} \coloneq \sum_{\sigma \in \tilde{G}_{m}} h_{K, m}(\sigma) \sigma^{-1} \in \Z_{p}[\tilde{G}_{m}].
\end{align}
Note that the stabilizer of $v_{n}$ in $\tilde{G}_{\infty}$ is equal to $U_{n}$ for all $n \geq 0$, and thus $h_{K, n}$ is well-defined.

Let $\pi_{m+1, m}: \Z_{p}[\tilde{G}_{m+1}] \rarrow \Z_{p}[\tilde{G}_{m}]$ be the natural projection, and $\tilde{\xi}_{m}: \Z_{p}[\tilde{G}_{m}] \rarrow \Z_{p}[\tilde{G}_{m+1}]$ the norm map.
Then, we have a recurrence formula $\pi_{m+1, m}(\tilde{\cal{L}}_{h, m+1}) = a_{p}(h)\tilde{\cal{L}}_{h, m} - \tilde{\xi}_{m-1} \tilde{\cal{L}}_{h, m-1}$, in particular, 
\begin{align}
  \pi_{m+1, m}(\tilde{\cal{L}}_{h, m+1}) = - \tilde{\xi}_{m-1} \tilde{\cal{L}}_{h, m-1} \label{eq:recurrence}
\end{align}
by the assumption $a_{p}(h)=0$ (for the proof, see \cite[Lemma 2.6]{Dar-Iov08}).

The group $\tilde{G}_{\infty}$ can be decomposed into $\Delta \times G_{\infty}$, where $\Delta$ is the torsion subgroup of $\tilde{G}_{\infty}$ and $G_{\infty}$ is its maximal torsion-free quotient.
It is known that $G_{\infty}$ is isomorphic to $\Z_{p}$ as a topological group.
By abuse of notation, we also denote the natural projection $\Z_{p}[G_{m+1}] \rarrow \Z_{p}[G_{m}]$ as $\pi_{m+1, m}$.
Let $\cal{L}_{n}$ and $\xi_{n}$ be the projection of $\tilde{\cal{L}}_{n}$ and $\tilde{\xi}_{n}$ by the natural projection $\Z_{p}[\tilde{G}_{m}] \rarrow \Z_{p}[G_{m}]$, respectively. 
Applying the natural projection $\Z_{p}[\tilde{G}_{m}] \rarrow \Z_{p}[G_{m}]$ to the recurrence formula \eqref{eq:recurrence}, we have
\begin{align}
  \pi_{m+1, m}(\cal{L}_{h, m+1}) = - \xi_{m-1} \cal{L}_{h, m-1}
\end{align}
for all $n \geq 1$.
The element $\cal{L}_{h, m}$ is called the $m$-th Bertolini--Darmon elements of $h$.
Note that $G_{\infty}$ is identified with the Galois group of the anticyclotomic $\Z_{p}$-extension $K_{\infty}/K$ by class field theory.
Given this, we define
\begin{align}
  L_{p}(f/K_{n}) \coloneq \cal{L}_{f, n} \cdot \iota(\cal{L}_{f, n}),
\end{align} 
where $K_{n}$ is the subfield of $K_{\infty}$ with $[K_{n}:K] = p^{n}$ and $\iota$ denote the involution sending $\gamma \longmapsto \gamma^{-1}$.

The Bertolini--Darmon elements satisfy the interpolation formula as the following:
Let $\alpha, \beta$ be the roots of the Hecke polynomial of $f$ at $p$.
Fix $\lambda \in \{ \alpha, \beta \}$ and define
\begin{align}
  \cal{L}_{f, n}^{\lambda} \coloneq \frac{1}{\lambda^{n+1}}\left( \cal{L}_{f, n} - \frac{1}{\lambda} \xi_{n-1}(\cal{L}_{f, n-1}) \right).
\end{align}
It is clear that we have $\pi_{n+1, n}(\cal{L}_{f, n+1}^{\lambda})=\cal{L}_{f, n}^{\lambda}$ by the recurrence formula \eqref{eq:recurrence} and $a_{p}(f) = 0$.
Therefore, the sequence $\left( \cal{L}_{f, n}^{\lambda} \right)_{n}$ converges to an element $\cal{L}_{f}^{\lambda}$ in the set of power series in $\Q_{p}\powser{T}$ such that converge in the open unit disk.

\begin{prp}[{\cite{Ber-Dar05}}]
  If $\chi$ is a finite character of $\Gal(K_{\infty}/K)$, then we have
  \begin{align}
    \left( \cal{L}_{f}^{\lambda}\cdot\iota(\cal{L}_{f}^{\lambda}) \right) (\chi) \overset{\cdot}{=} \frac{L(f \otimes \chi, 1)}{\sqrt{|D_{K}|} \Omega_{f}},
  \end{align}
  where $\Omega_{f} \coloneq \langle f, f \rangle_{\mathrm{Pet}}$, and $\overset{\cdot}{=}$ means the equality up to a non-zero algebraic fudge factor.    
\end{prp}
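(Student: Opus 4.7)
The plan is to reduce the interpolation formula to a Gross (Waldspurger) toric-period formula for the tree-theoretic modular form $h$, following the template of Bertolini--Darmon. Fix $n$ large enough that $\chi$ factors through $G_{n} = \Gal(K_{n}/K)$. Unwinding the definition of $\cal{L}_{f,n}$ yields
\begin{align}
\chi\!\left(\cal{L}_{f,n}\right) = \sum_{\sigma \in G_{n}} h(\sigma \star v_{n})\,\chi^{-1}(\sigma),
\end{align}
which is the toric period of $h$ along the CM-cycle cut out by the optimal embedding $\Psi$ at level $p^{n}$. Since $\chi$ is a ring-class (anticyclotomic) character, $\iota(\cal{L}_{f,n})(\chi) = \cal{L}_{f,n}(\chi^{-1}) = \overline{\chi(\cal{L}_{f,n})}$, so the quantity on the left-hand side of the claim is, up to the $\lambda$-stabilization factor, the square of the absolute value of this toric period.

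The central analytic input is a Gross-type formula expressing the squared toric period, up to an explicit algebraic factor, as $L(f \otimes \chi, 1)/(\sqrt{|D_{K}|}\,\Omega_{f})$: the factor $\sqrt{|D_{K}|}$ arises from the comparison between local measures at the archimedean place together with the discriminant of $K$, and $\Omega_{f}$ from the Petersson pairing. By the Jacquet--Langlands correspondence (Proposition~\ref{prp:J-L corr}), $h$ is the quaternionic transfer of $f$, so the toric period on the $B$-side indeed computes the same $L$-value. In the inert supersingular setting considered here, the required version of the Gross formula is established using the Bruhat--Tits tree in place of a finite set of supersingular points; see \cite{Ber-Dar05} and \cite{Dar-Iov08}.

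It remains to transfer this information from $\cal{L}_{f,n}$ to the $\lambda$-stabilization $\cal{L}_{f,n}^{\lambda}$. Starting from the definition and using the recurrence \eqref{eq:recurrence} together with $\alpha+\beta = 0$ and $\alpha\beta = p$ (both forced by $a_{p}(f)=0$, so $\lambda^{2} = -p$), one checks that for $\chi$ of exact conductor $p^{n}$ the norm-correction term $\chi(\xi_{n-1}\cal{L}_{f,n-1})$ vanishes, giving $\chi(\cal{L}_{f,n}^{\lambda}) = \lambda^{-(n+1)}\chi(\cal{L}_{f,n})$; for characters of smaller conductor the standard Euler factor at $p$ appears and matches the expected interpolation. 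Multiplying by the $\iota$-conjugate then yields the asserted identity up to an algebraic unit. The main obstacle is the tree-theoretic Gross formula itself: unlike the split case, where CM points form a finite set of supersingular points on a Shimura curve, here CM-vertices accumulate along the axis on the Bruhat--Tits tree fixed by $\tilde{G}_{\infty}$, and the toric-period interpretation demands more care. This ingredient is, however, available in the cited literature, so what remains is routine bookkeeping of the $\lambda$-stabilization.
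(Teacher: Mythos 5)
The paper offers no proof of this proposition; it simply cites \cite{Ber-Dar05}, where the interpolation formula is established. Your sketch correctly reconstructs the standard line of argument there: unwind $\chi(\cal{L}_{f,n})$ as a toric period of $h$ over the CM cycle carved out by the optimal embedding $\Psi$, feed it through the Jacquet--Langlands transfer (Proposition~\ref{prp:J-L corr}) and a Gross/Waldspurger-type special-value formula to relate the squared period to $L(f\otimes\chi,1)/(\sqrt{|D_K|}\,\Omega_f)$, and track the $\lambda$-stabilization using $\lambda^2=-p$ and the vanishing of $\chi\bigl(\xi_{n-1}\cal{L}_{f,n-1}\bigr)$ when $\chi$ has exact conductor $p^{n}$. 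Two minor caveats: the evaluation $\chi(\cal{L}_{f,n})$ should be taken on $\Z_{p}[G_{n}]$ after pushing $\tilde{\cal{L}}_{f,n}$ down from $\Z_{p}[\tilde G_{n}]$, so the sum you write is really over cosets of $\Delta$; and ``$\overline{\chi(\cal{L}_{f,n})}$'' only makes sense after fixing a complex embedding, with the rationality of the values of $h$ doing the work. Neither is a serious gap, but the central analytic input (the tree-theoretic Gross formula) remains cited rather than proved, which is precisely what the paper itself does by pointing to \cite{Ber-Dar05}; your outline is therefore a faithful reconstruction rather than an independent argument.
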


Fix a topological generator $\gamma \in G_{\infty}$.
Let $\phi_{p^{n}}(T)$ denote the $p^{n}$-th cyclotomic polynomial.
Then, we can identify $\xi_{n}$ with $\phi_{p^{n}}(\gamma-1)$ in $\Z_{p}[G_{n}]$.
For a positive integer $n$, we define 
\begin{align}
  \tilde{\omega}_{n}^{+} \coloneq \prod_{\substack{1 \leq k \leq n \\ k:\text{even}}} \xi_{k}, \quad \textrm{and } 
  \tilde{\omega}_{n}^{-} \coloneq \prod_{\substack{1 \leq k \leq n \\ k:\text{odd}}} \xi_{k}.
\end{align}
Furthermore, let $\omega_{0}^{+} \coloneq 1$, $\omega_{0}^{-} \coloneq \gamma -1$, $\omega_{n}^{+} \coloneq \tilde{\omega}_{n}^{+}$, and $\omega_{n}^{-} \coloneq (\gamma - 1) \tilde{\omega}_{n}^{-}$, which are the same notations as \cite{BKOpre}.
The following is a key proposition for the construction of the plus/minus $p$-adic $L$-functions.

\begin{prp}[{\cite[Proposition 2.8]{Dar-Iov08}}] \label{prp:Ln properties}
  We have the following.
  \begin{enumerate}
    \item If $n$ is even, there exists uniquely $L_{n}^{+} \in \Lambda/(\gamma -1)\omega_{n}^{+}\Lambda$ such that $\cal{L}_{h, n}=\tilde{\omega}_{n}^{-}L_{n}^{+}$.
    \item If $n$ is odd, there exists uniquely $L_{n}^{-} \in \Lambda/\omega_{n}^{-}\Lambda$ such that $\cal{L}_{h, n}=\tilde{\omega}_{n}^{+}L_{n}^{-}$.    
  \end{enumerate}
\end{prp}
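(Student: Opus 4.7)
The plan is to proceed by induction on $n$, using the recurrence \eqref{eq:recurrence} specialized to $a_p(h) = 0$, which (reindexed via $m+1 = n$) reads $\pi_{n,n-1}(\cal{L}_{h,n}) = -\xi_{n-1}\cal{L}_{h,n-2}$ in $\Lambda_{n-1}$. First I would reformulate the divisibility via characters: after extending scalars to $\overline{\Q}_p$, $\Lambda_n$ decomposes into one-dimensional eigenspaces indexed by characters $\chi$ of $G_n$ grouped by their exact orders $p^k$, $0 \leq k \leq n$. By the identification $\xi_j = \phi_{p^j}(\gamma - 1) = \Phi_{p^j}(\gamma)$, the element $\xi_j$ acts on the $\chi$-component as $\Phi_{p^j}(\chi(\gamma))$, which vanishes if and only if $\chi$ has exact order $p^j$. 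Consequently, $\tilde{\omega}_n^-$ (resp.\ $\tilde{\omega}_n^+$) vanishes precisely on characters of odd (resp.\ positive even) order $p^k$ with $1 \leq k \leq n$.

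The base cases $n = 0$ and $n = 1$ are immediate since $\tilde{\omega}_0^-$ and $\tilde{\omega}_1^+$ are empty products. For the inductive step, assume the statement at level $n-2$ and let $\chi$ be a character of $G_n$ of exact order $p^k$ with the parity dictated by $n$ (odd if $n$ is even, positive even if $n$ is odd). Since $k \leq n-1$, $\chi$ factors through $G_{n-1}$, so applying $\chi$ to the recurrence gives
\begin{align*}
\chi(\cal{L}_{h,n}) = -\chi(\xi_{n-1})\,\chi(\cal{L}_{h,n-2}).
\end{align*}
If $k = n-1$ (which has the correct parity in either case), then $\chi(\xi_{n-1}) = \Phi_{p^{n-1}}(\chi(\gamma)) = 0$. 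Otherwise $k \leq n-3$, so $\chi$ further factors through $G_{n-2}$; since $n-2$ shares parity with $n$, the induction hypothesis applies to $\cal{L}_{h,n-2}$ in the same parity class of $k$, yielding $\chi(\cal{L}_{h,n-2}) = 0$. In either case $\chi(\cal{L}_{h,n}) = 0$, which gives the divisibility of $\cal{L}_{h,n}$ by $\tilde{\omega}_n^\mp$ in $\Lambda_n \otimes \Q_p$.

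To upgrade to integral divisibility, I would work one factor at a time: the quotient $\Lambda_n / \xi_k \Lambda_n \cong \Z_p[\zeta_{p^k}]$ is $\Z_p$-torsion-free, so vanishing of $\chi(\cal{L}_{h,n})$ on all characters of exact order $p^k$ already forces $\cal{L}_{h,n} \in \xi_k \Lambda_n$ integrally; combining across distinct (pairwise coprime) $\xi_k$'s using the factorization $\gamma^{p^n} - 1 = (\gamma - 1)\prod_{k=1}^n \xi_k$ produces $\cal{L}_{h,n} \in \tilde{\omega}_n^\mp \Lambda_n$ integrally. For uniqueness, the relation $(\gamma - 1)\tilde{\omega}_n^+ \cdot \tilde{\omega}_n^- = \gamma^{p^n} - 1 = 0$ in $\Lambda_n$ shows that $(\gamma - 1)\tilde{\omega}_n^+$ annihilates $\tilde{\omega}_n^-$, and a lift-to-$\Z_p[\gamma]$ divisibility check shows this is the full annihilator; since $\omega_n^+ = \tilde{\omega}_n^+$ and $\omega_n^- = (\gamma - 1)\tilde{\omega}_n^-$ by definition, the ambiguity equals $(\gamma - 1)\omega_n^+ \Lambda_n$ when $n$ is even and $\omega_n^- \Lambda_n$ when $n$ is odd. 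The main technical hurdle is this integrality step: $\Lambda_n$ carries $p$-power torsion in naive quotients such as $\Lambda_n/\tilde{\omega}_n^\mp\Lambda_n$, which is why the argument is routed through the torsion-free cyclotomic quotients $\Z_p[\zeta_{p^k}]$ one $k$ at a time rather than directly in $\Lambda_n/\tilde{\omega}_n^\mp \Lambda_n$.
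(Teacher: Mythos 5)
Your proof is sound, and it is worth noting that the paper itself offers no proof here: Proposition \ref{prp:Ln properties} is stated with a bare citation to Darmon--Iovita \cite[Proposition 2.8]{Dar-Iov08}, so there is no in-paper argument to compare against. Your argument by evaluation at finite-order characters followed by the lift-to-$\Z_p[\gamma]$ integrality step is essentially the standard proof of this kind of plus/minus divisibility statement. A few remarks on the more delicate points.

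The reindexing of the recurrence is a place where the paper's notation is genuinely treacherous. The paper's norm map is $\tilde{\xi}_{m-1}\colon \Lambda_{m-1}\to\Lambda_{m}$ (indexed by the source), and multiplication on $\Lambda_m$ is by the norm element $\Phi_{p^m}(\gamma)$; meanwhile the paper separately identifies the \emph{symbol} $\xi_n$ with $\Phi_{p^n}(\gamma)$ when defining $\tilde{\omega}_n^{\pm}$. So the literal substitution $m+1=n$ in the paper's formula reads $\pi_{n,n-1}(\cal{L}_{h,n})=-\tilde{\xi}_{n-2}\cal{L}_{h,n-2}$, while the multiplier, as an element of $\Lambda_{n-1}$, is $\Phi_{p^{n-1}}(\gamma)=\xi_{n-1}$. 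You wrote the latter, which is the correct element to use; you may want to flag explicitly that you are switching from the norm-map indexing to the element indexing, since a reader following the paper's subscripts could think you have mis-shifted.

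For the integrality step, the claim that divisibility by each $\xi_k$ ($k\in S$) in $\Lambda_n$ upgrades to divisibility by $\prod_{k\in S}\xi_k$ is correct, but your phrase ``pairwise coprime'' should really be ``pairwise non-associate irreducibles in $\Z_p[T]$'': the $\xi_k$ are all congruent to powers of $T$ modulo $p$, so they are far from comaximal. The clean way to phrase what you are doing is: lift $\cal{L}_{h,n}$ to $\Z_p[T]$; since each $\xi_k$ divides $(1+T)^{p^n}-1$, divisibility in $\Lambda_n$ by $\xi_k$ becomes divisibility in $\Z_p[T]$ by $\xi_k$; unique factorization in $\Z_p[T]$ then gives divisibility by the product, which descends to $\Lambda_n$. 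The same lift-and-cancel argument handles your uniqueness claim that $\mathrm{Ann}_{\Lambda_n}(\tilde{\omega}_n^{-})=\bigl((\gamma-1)\tilde{\omega}_n^{+}\bigr)$ (and symmetrically for $\tilde{\omega}_n^{+}$), matching the stated ambiguity groups $(\gamma-1)\omega_n^{+}\Lambda_n$ and $\omega_n^{-}\Lambda_n$. With those small clarifications your argument is complete and correct.
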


Now, we define 
\begin{align}
  \cal{L}_{n}^{+} &\coloneq (-1)^{n/2}L_{n}^{+} \quad \text{if $n$ is even}, \\
  \cal{L}_{n}^{-} &\coloneq (-1)^{(n+1)/2}L_{n}^{-} \quad \text{if $n$ is odd}.
\end{align}
Then, the sequences $\{ \cal{L}_{n}^{+} \}_{n: \text{even}}$, $\{ \cal{L}_{n}^{-} \}_{n: \text{odd}}$ are compatible with the natural projections $\Lambda/(\gamma - 1)\omega_{n}^{+} \rarrow  \Lambda/(\gamma -1)\omega_{n-2}^{+}$ and $\Lambda/\omega_{n}^{-} \rarrow  \Lambda/\omega_{n-2}^{-}$, respectively.
Therefore, we can define 
\begin{align}
  \cal{L}_{h}^{+} &\coloneq \varprojlim_{n} \cal{L}_{n}^{+} \in \varprojlim_{n} \Lambda/(\gamma -1)\omega_{n}^{+} \simeq \Lambda, \\
  \cal{L}_{h}^{-} &\coloneq \varprojlim_{n} \cal{L}_{n}^{-} \in \varprojlim_{n} \Lambda/\omega_{n}^{-} \simeq \Lambda.
\end{align}
Then, the plus/minus $p$-adic $L$-functions for $h$ and $K$ are defined by
\begin{align}
  L_{p}^{\pm}(h/K_{\infty}) \coloneq \cal{L}_{h}^{\pm} \cdot \iota \bigl( \cal{L}_{h}^{\pm} \bigr).
\end{align}
For a newform $f$ corresponding to $h \in S_{2}(\cal{T}/\Gamma, \cal{O}_{F})$ by Proposition \ref{prp:J-L corr}, we also define $L_{p}^{\pm}(f/K_{\infty}) \coloneq L_{p}^{\pm}(h/K_{\infty})$ and $L_{p}(f/K_{n}) \coloneq L_{p}(h/K_{n})$, respectively.

\section{The plus/minus Selmer group}
\noindent
In this section, we define the plus/minus Selmer group and introduce previous works used in the proof of Theorem \ref{thm:mthm}.
As in the previous section, fix a prime number $p \geq 5$.

First, we recall the Rubin conjecture. 
(For the detail, see \cite{BKO21}, \cite{BKOpre}.)
Let $\Phi$ be the unramified quadratic extension of $\Q_{p}$ and $\cal{O}$ its ring of integers.
We fix the Lubin--Tate formal group $\mathscr{F}$ defined over $\cal{O}$ of height 2 with the parameter $\pi \coloneq -p$.
For $n \geq -1$, we define $\Phi_{n} = \Phi(\mathscr{F}[\pi^{n+1}])$ by the extension of $\Phi$ generated by the $\pi^{n+1}$-torsion points of $\mathscr{F}$, and put $\Phi_{\infty} = \bigcup_{n \geq 0} \Phi_{n}$.
Then, the Galois action on the $\pi$-adic Tate module $T_{\scr{F}}\coloneq T_{\pi}\scr{F}$ of $\scr{F}$ induces a natural isomorphism
\begin{align}
 \Gal(\Phi_{\infty}/\Phi) \rarrow \Aut(T_{\scr{F}}) \simeq \cal{O}^{\times} \simeq \Delta \times \cal{O}.
\end{align}
by the Lubin--Tate theory, where $\Delta=\Gal(\Phi_{0}/\Phi) \simeq (\cal{O}/\pi\cal{O})^{\times}$.
By the natural action of $\Gal(\Phi/\Q_{p})$ on $G \coloneq \Gal(\Phi_{\infty}/\Phi_{0})$, $G$ has the maximal subgroup $G^{-} \simeq \Z_{p}$ such that $\Gal(\Phi/\Q_{p})$ acts via the non-trivial character.
Let $\Psi_{\infty}$ be the fixed subfield of $\Phi_{\infty}$ by $G^{-}$, and $\Psi_{n}$ be the subfield of $\Psi_{\infty}$ such that $\Gal(\Psi_{n}/\Phi) \simeq \Z/p^{n}\Z$. 
Define the Iwasawa algebra $\Lambda_{\mathcal{O}}\coloneq\mathcal{O} \llbracket G^{-} \rrbracket$.
We fix a topological generator $\gamma$ of $G^{-}$.

We define $T_{\scr{F}}^{\otimes -1} \coloneq \Hom_{\cal{O}}(T_{\scr{F}}, \cal{O})$ and $\bb{T} \coloneq T_{\scr{F}}^{\otimes -1}(1) \otimes_{\cal{O}} \Lambda_{\cal{O}}$, where $T_{\scr{F}}^{\otimes -1}(1)$ is the Tate twist for $T_{\scr{F}}^{\otimes -1}$.
In particular, we have a canonical isomorphism $H^{1}(\Phi, \bb{T}) \simeq \varprojlim_{n} H^{1}(\Psi_{n}, T_{\scr{F}}^{\otimes -1}(1))$ by Shapiro's lemma.
Let $\Xi$ be the set of finite characters of $G^{-}$,
\begin{align}
 \Xi^{+} &\coloneq \{ \chi \in \Xi ~\vline~ \textrm{the conductor of $\chi$ is an even power of $p$} \}, \\
 \textrm{ and } \Xi^{-} &\coloneq \{ \chi \in \Xi ~\vline~ \textrm{the conductor of $\chi$ is an odd power of $p$} \}.
\end{align}
For $\chi \in \Xi$, we identify $\mathrm{coLie}(\scr{F}) \otimes_{\cal{O}} \Phi(\Im(\chi))$ with $\Phi(\Im(\chi))$ by fixing a $\cal{O}$-basis of $\mathrm{coLie}(\scr{F})$.
Then, we define $\exp_{\chi}^{*}:H^{1}(\Phi, \bb{T}) \rarrow \Phi(\Im(\chi))$ by the composition of the map $H^{1}(\Phi, \bb{T}) \rarrow H^{1}(\Phi, T_{\scr{F}}^{\otimes -1}(1)(\chi))$ induced the specialization map $\Lambda_{\cal{O}} \rarrow \cal{O}[\Im (\chi)]$ at $\chi$ and the dual exponential map $H^{1}(\Phi, T_{\scr{F}}^{\otimes -1}(1)(\chi)) \rarrow \mathrm{coLie}(\scr{F}) \otimes_{\cal{O}} \Phi(\Im(\chi)) = \Phi(\Im(\chi))$.
\begin{dfn}
  $H^{1}_{\pm}(\Phi, \bb{T}) \coloneq \{ x \in H^{1}(\Phi, \bb{T}) ~\vline~ \exp_{\chi}^{*}(x) = 0 \textrm{ for all $\chi \in \Xi^{\mp}$} \}$
\end{dfn}

K. Rubin showed that $H^{1}_{\pm}(\Phi, \bb{T})$ is a free $\Lambda_{\cal{O}}$-module of rank 1 (cf. \cite{Rub87}, \cite{BKOpre}).
The following theorem is conjectured by K. Rubin in \cite{Rub87}, and proved by A. Burungale, S. Kobayashi, and K. Ota (cf. \cite{BKO21}).

\begin{thm}[Rubin conjecture]\label{Rubin conj}
  $H^{1}(\Phi, \bb{T}) = H^{1}_{+}(\Phi, \bb{T}) \oplus H^{1}_{-}(\Phi, \bb{T})$.
\end{thm}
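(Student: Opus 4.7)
The plan is to establish the direct-sum decomposition in two complementary steps: first to verify that the internal sum $H^{1}_{+}(\Phi, \bb{T}) + H^{1}_{-}(\Phi, \bb{T})$ fills up all of $H^{1}(\Phi, \bb{T})$, and then separately that $H^{1}_{+}(\Phi, \bb{T}) \cap H^{1}_{-}(\Phi, \bb{T}) = 0$. Both will be controlled through a pair of Coleman-type maps adapted to the height-two Lubin--Tate formal group $\scr{F}$, constructed so that their kernels coincide with the prescribed plus/minus local conditions.

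The first step is to construct Coleman maps $\mathrm{Col}^{\pm}: H^{1}(\Phi, \bb{T}) \rarrow \Lambda_{\cal{O}}$ whose kernels are exactly $H^{1}_{\mp}(\Phi, \bb{T})$. I would proceed via the Perrin-Riou big logarithm in the Lubin--Tate setting (or equivalently a Wach-module description of $\bb{T}$): this provides a regulator map from $H^{1}(\Phi, \bb{T})$ into a suitable distribution algebra, whose value at each finite character $\chi \in \Xi$ recovers $\exp_{\chi}^{*}$ up to an explicit local factor depending on the parity of the conductor of $\chi$. By separating the contributions coming from $\Xi^{+}$ and $\Xi^{-}$ and factoring out the relevant vanishing cyclotomic-type factors, one extracts two $\Lambda_{\cal{O}}$-valued maps $\mathrm{Col}^{\pm}$. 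By construction, $\mathrm{Col}^{\mp}(x) = 0$ precisely when $\exp_{\chi}^{*}(x)=0$ for every $\chi \in \Xi^{\mp}$, i.e.\ exactly when $x \in H^{1}_{\pm}(\Phi, \bb{T})$.

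The second step is to combine these into a single map
\[
    \Phi_{\mathrm{Col}} \coloneq (\mathrm{Col}^{+}, \mathrm{Col}^{-}) : H^{1}(\Phi, \bb{T}) \rarrow \Lambda_{\cal{O}} \oplus \Lambda_{\cal{O}}
\]
and to show it is an isomorphism. Since $H^{1}(\Phi, \bb{T})$ has $\Lambda_{\cal{O}}$-rank two (by the local Euler characteristic formula applied to $\bb{T}$) and each summand $H^{1}_{\pm}(\Phi, \bb{T})$ is free of rank one by Rubin, this reduces to checking that the determinant of $\Phi_{\mathrm{Col}}$, measured against $\Lambda_{\cal{O}}$-bases of source and target, is a \emph{unit}. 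Granted this, the theorem follows formally: the intersection $H^{1}_{+}(\Phi, \bb{T}) \cap H^{1}_{-}(\Phi, \bb{T})$ lies in $\Ker(\mathrm{Col}^{+}) \cap \Ker(\mathrm{Col}^{-}) = \Ker(\Phi_{\mathrm{Col}}) = 0$, while surjectivity of $\Phi_{\mathrm{Col}}$ lets us decompose any class along the two factors.

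The hard part is the unit-determinant claim, which is really an explicit reciprocity law at height two. In Kobayashi's classical height-one (cyclotomic) setting, one pairs plus/minus Coleman maps against a norm-compatible system of torsion points and concludes via an explicit formula for the dual exponential. In the present Lubin--Tate tower $\Phi_{\infty}/\Phi$ this direct approach is obstructed both by the ramification of the tower and by the non-triviality of the formal logarithm of $\scr{F}$; instead one must evaluate $\mathrm{Col}^{\pm}$ against a distinguished norm-compatible system of units constructed along the Lubin--Tate tower, or compare two Perrin-Riou style regulators. This is precisely the analytic content of the explicit reciprocity law of Burungale--Kobayashi--Ota, which supplies the missing unit computation and completes the decomposition.
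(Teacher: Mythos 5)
The paper does not give a proof of this statement at all: it is cited verbatim as the theorem of Burungale--Kobayashi--Ota, with the only surrounding text being ``conjectured by K.\ Rubin in \cite{Rub87}, and proved by A.\ Burungale, S.\ Kobayashi, and K.\ Ota (cf.\ \cite{BKO21}).'' So there is no internal argument to compare against.

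Your sketch is a sensible high-level outline of one way such a decomposition could be organized --- pair of Coleman-type maps $\mathrm{Col}^{\pm}$ cutting out the $\mp$ conditions as kernels, assemble them into $\Phi_{\mathrm{Col}}$, and reduce the direct sum to a unit-determinant statement. The rank bookkeeping you invoke (local Euler characteristic gives rank two, Rubin gives each $H^{1}_{\pm}$ free of rank one) is correct, and framing the core difficulty as ``determinant is a unit'' is structurally the right way to see where the content lives. But you should be candid that the proposal is not a proof: the unit-determinant claim \emph{is} Rubin's conjecture in slightly different clothing, and your final paragraph explicitly defers it to ``the explicit reciprocity law of Burungale--Kobayashi--Ota.'' In other words, you have re-derived the reduction but left the essential analytic input exactly where the paper leaves it, namely as a black-box citation to \cite{BKO21}. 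That is a reasonable thing to do for a deep external theorem, and it matches the paper's treatment; just be aware that what you wrote is a reformulation plus a citation, not an independent argument. One small caution on the mechanics: it is also worth noting that over $\Lambda_{\cal{O}}$ (a two-dimensional regular local ring) the rank count alone does not give a direct sum --- one genuinely needs the determinant to be a unit, not merely nonzero --- so the ``hard part'' you identify is not a technical afterthought but the entire theorem.
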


Using Rubin conjecture, we define the plus/minus subgroup of local points of $\scr{F}$.
Let $v_{\pm}$ be a $\Lambda$-basis for $H^{1}_{\pm}(\Phi, \bb{T})$, and $v_{\pm, n}$ be the image of $v_{\pm}$ by the map $H^{1}(\Phi, \bb{T}) \rarrow H^{1}(\Psi_{n}, T_{\scr{F}}^{\otimes -1}(1))$.
Define $\Lambda_{\mathcal{O}, n}=\mathcal{O}[\Gal(\Psi_{n}/\Phi)]$ for any $n \geq 0$.
By \cite[Lemma 2.3]{BKOpre}, $v_{\pm, n}$ is a $\Lambda_{\cal{O}, n}$-basis of $H^{1}(\Psi_{n}, T_{\scr{F}}^{\otimes -1}(1))$.
We define the pairing
\begin{align}
  (~,~)_{\Lambda_{\cal{O}, n}}:H^{1}(\Psi_{n}, T_{\mathscr{F}}) \times H^{1}(\Psi_{n}, T_{\mathscr{F}}^{\otimes -1}(1)) \rarrow \Lambda_{\cal{O}, n},
\end{align}
which is extended the natural pairing $H^{1}(\Psi_{n}, T_{\mathscr{F}}) \times H^{1}(\Psi_{n}, T_{\mathscr{F}}^{\otimes -1}(1)) \rarrow \cal{O}$. 
(In details, see \cite[Section 2.2.2]{BKOpre}.) 
The pairing $(~, ~)_{\Lambda_{\cal{O}, n}}$ is perfect by \cite[Lemma 2.3(2)]{BKOpre}. 
Let $v_{\pm, n}^{\perp}$ be the dual basis of $v_{\pm, n}$ with respect to the above pairing $(~, ~)_{\Lambda_{\cal{O}, n}}$.
Using these, we define local points
\begin{align}
  c_{n}^{\pm} \coloneq \omega_{n}^{\mp}v_{\pm, n}^{\perp} \in H^{1}(\Psi_{n}, T_{\mathscr{F}}).
\end{align}
Since we have $c_{n}^{\pm} \in H_{f}^{1}(\Psi_{n}, T_{\mathscr{F}})$ by \cite[Lemma 2.5]{BKOpre}, $c_{n}^{\pm}$ can be seen as elements of $\mathscr{F}(\fr{m}_{n})$.

For any $n \geq 0$, let $\Xi_{n}^{\pm}$ be the set of $\chi \in \Xi^{\pm}$ factoring through $\Gal(\Psi_{n}/\Phi)$.
For $\chi \in \Xi_{n}^{\pm}$ and $x \in \mathscr{F}(\fr{m}_{n})$, we define
\begin{align}
 \lambda_{\chi}(x) = \frac{1}{p^{n}} \sum_{\sigma \in \Gal(\Psi_{n}/\Phi)} \chi^{-1}(\sigma)\lambda(x)^{\sigma},
\end{align}
and the $\Lambda_{\cal{O}, n}$-submodule of $\scr{F}(\fr{m}_{n})$
\begin{align}
 \scr{F}(\fr{m}_{n})^{\pm}=\{ x \in \scr{F}(\fr{m}_{n}) ~\vline~ \lambda_{\chi}(x)=0 \textrm{ for all } \chi \in \Xi_{n}^{\pm} \},
\end{align}
where $\fr{m}_{n}$ is the maximal ideal of $\Psi_{n}$.

\begin{thm}[{\cite[Theorem 2.7]{BKOpre}}]\label{thm:decomposition}
  Let $n \geq 0$.
  Then, we have the following.
  \begin{enumerate}
    \item $c_{n}^{\pm} \in \mathscr{F}(\fr{m}_{n})^{\pm}$, and $\mathscr{F}(\mathfrak{m}_{n})^{\pm}$ are generated by $c_{n}^{\pm}$ as $\Lambda_{\mathcal{O}, n}$-module.
    
    \item $\mathscr{F}(\mathfrak{m}_{n})=\mathscr{F}(\mathfrak{m}_{n})^{+} \oplus \mathscr{F}(\mathfrak{m}_{n})^{-}$ as $\Lambda_{\mathcal{O}, n}$-module.
  \end{enumerate}
\end{thm}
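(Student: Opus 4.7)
My plan is to follow Kobayashi's strategy for plus/minus local points, adapted to the inert Lubin--Tate setting where Theorem \ref{Rubin conj} plays the role of Rubin's original conjecture. The basic idea is threefold: first, use the perfect pairing $(~,~)_{\Lambda_{\cal{O}, n}}$ to transport the Rubin decomposition from $H^{1}(\Psi_{n}, T_{\scr{F}}^{\otimes -1}(1))$ to a dual basis on $H^{1}(\Psi_{n}, T_{\scr{F}})$; second, use Bloch--Kato-style local duality to translate the defining vanishing $\exp_{\chi}^{*}(v_{\pm,n}) = 0$ (for $\chi \in \Xi_{n}^{\mp}$) into vanishing of $\lambda_{\chi}$ on certain elements of $H^{1}(\Psi_{n}, T_{\scr{F}})$; and third, exploit the vanishing of $\omega_{n}^{\mp}$ at the characters in $\Xi_{n}^{\pm}$ to pin down $c_{n}^{\pm}$ as the precise generator of $\scr{F}(\fr{m}_{n})^{\pm}$.

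In more detail, by Theorem \ref{Rubin conj} and the perfectness of $(~,~)_{\Lambda_{\cal{O}, n}}$, the module $H^{1}(\Psi_{n}, T_{\scr{F}})$ is free of rank $2$ over $\Lambda_{\cal{O}, n}$ with basis $\{v_{+,n}^{\perp}, v_{-,n}^{\perp}\}$, so each $x \in \scr{F}(\fr{m}_{n}) \subset H^{1}(\Psi_{n}, T_{\scr{F}})$ writes uniquely as $x = a_{+} v_{+,n}^{\perp} + a_{-} v_{-,n}^{\perp}$ with $a_{\pm} \in \Lambda_{\cal{O}, n}$. To check (1), I would compute $\lambda_{\chi}(c_{n}^{\pm})$ using the local duality formula relating $\chi\bigl((x, y)_{\Lambda_{\cal{O}, n}}\bigr)$ to the character-level pairing between $\lambda_{\chi}(x)$ and $\exp_{\chi^{-1}}^{*}(y)$. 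Since by construction $(c_{n}^{\pm}, v_{\mp, n})_{\Lambda_{\cal{O}, n}} = 0$ and $(c_{n}^{\pm}, v_{\pm, n})_{\Lambda_{\cal{O}, n}} = \omega_{n}^{\mp}$, and $\omega_{n}^{\mp}$ vanishes at every $\chi \in \Xi_{n}^{\pm}$, the duality formula forces $\lambda_{\chi}(c_{n}^{\pm}) = 0$ there, yielding $c_{n}^{\pm} \in \scr{F}(\fr{m}_{n})^{\pm}$. For the converse, any $x \in \scr{F}(\fr{m}_{n})^{+}$ must satisfy $\chi(a_{+}) = 0$ for all $\chi \in \Xi_{n}^{+}$ (the $v_{-,n}^{\perp}$-contribution vanishes automatically because $\exp_{\chi^{-1}}^{*}(v_{-, n}) = 0$ for $\chi \in \Xi_{n}^{+}$), so $a_{+} \in \omega_{n}^{-} \Lambda_{\cal{O}, n}$. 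A parallel analysis for $\scr{F}(\fr{m}_{n})^{-}$ identifies $\scr{F}(\fr{m}_{n})$ inside $H^{1}(\Psi_{n}, T_{\scr{F}})$ as $\omega_{n}^{-} \Lambda_{\cal{O}, n} v_{+,n}^{\perp} \oplus \omega_{n}^{+} \Lambda_{\cal{O}, n} v_{-,n}^{\perp}$, giving (1) and (2) simultaneously.

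I expect the main obstacle to be the last implication: showing not only that $a_{+}$ is annihilated by the characters in $\Xi_{n}^{+}$ but that its annihilator ideal is \emph{exactly} $\omega_{n}^{-} \Lambda_{\cal{O}, n}$, and equivalently that the Kummer image $\scr{F}(\fr{m}_{n}) \subset H^{1}(\Psi_{n}, T_{\scr{F}})$ coincides with $\omega_{n}^{-} \Lambda_{\cal{O}, n} v_{+,n}^{\perp} \oplus \omega_{n}^{+} \Lambda_{\cal{O}, n} v_{-,n}^{\perp}$. This requires more than the formal vanishing of $\omega_{n}^{\mp}$: one must also control the \emph{non}vanishing of $\exp_{\chi^{-1}}^{*}(v_{\pm,n})$ at the complementary characters by suitable units, which in the Lubin--Tate setting is achieved through the explicit reciprocity law for $\scr{F}$ underlying \cite{BKOpre} and constitutes the technical heart of the argument.
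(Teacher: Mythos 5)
The paper does not prove this theorem; it is imported wholesale as \cite[Theorem~2.7]{BKOpre}, with the Rubin conjecture (Theorem~\ref{Rubin conj}) supplying the needed decomposition of $H^1(\Phi,\bb{T})$ and \cite[Lemmas~2.3 and~2.5]{BKOpre} supplying the freeness of $H^1(\Psi_n, T_{\scr{F}}^{\otimes -1}(1))$ and the fact that $c_n^{\pm}$ lands in $H^1_f$. So there is no in-paper proof to compare against; you are reconstructing the argument of \cite{BKOpre}. Your high-level plan is the right one: transport Rubin's splitting through the perfect $\Lambda_{\cal{O},n}$-pairing, use the explicit reciprocity law relating $\chi\bigl((x,y)_{\Lambda_{\cal{O},n}}\bigr)$ to $\lambda_{\chi}(x)\exp^{*}_{\bar\chi}(y)$, and then pin down $\scr{F}(\fr{m}_n)$ inside $H^1(\Psi_n,T_{\scr{F}})\simeq \Lambda_{\cal{O},n}v^{\perp}_{+,n}\oplus\Lambda_{\cal{O},n}v^{\perp}_{-,n}$. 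You also correctly identify the load-bearing step: one needs a \emph{non}-vanishing statement for $\exp^{*}_{\bar\chi}(v_{\pm,n})$ at the complementary characters, which is precisely where the explicit reciprocity law of \cite{BKOpre} enters, and you leave it as a black box. That is an honest but real gap: without it you cannot conclude that the Kummer image coincides with $\omega_{n}^{-}\Lambda_{\cal{O},n}v^{\perp}_{+,n}\oplus\omega_{n}^{+}\Lambda_{\cal{O},n}v^{\perp}_{-,n}$ rather than merely containing it, and in particular you cannot yet obtain the direct-sum decomposition in (2).

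There is also a concrete bookkeeping problem in your central implication. You assert that $\chi(a_{+})=0$ for all $\chi\in\Xi_{n}^{+}$ forces $a_{+}\in\omega_{n}^{-}\Lambda_{\cal{O},n}$. With the paper's normalizations, $\omega_{n}^{-}=(\gamma-1)\prod_{k\,\mathrm{odd}}\phi_{p^{k}}(\gamma)$ vanishes precisely at the trivial character together with the characters in $\Xi_{n}^{-}$, whereas $\Xi_{n}^{+}$ consists of the trivial character together with characters of conductor $p^{2},p^{4},\ldots$. Vanishing on $\Xi_{n}^{+}$ therefore puts $a_{+}$ into $(\gamma-1)\tilde\omega_{n}^{+}\Lambda_{\cal{O},n}$, not $\omega_{n}^{-}\Lambda_{\cal{O},n}$; these ideals are different, and the polynomial you want actually has the \emph{opposite} parity to the one you invoke. (The confusion is easy to make because the $\pm$-labels pass through two dualities, once from $H^1_{\pm}$ to the dual basis $v^{\perp}_{\pm,n}$ and once more through the pairing of $\lambda_{\chi}$ against $\exp^{*}_{\bar\chi}$, and because the $(\gamma-1)$ factor attaches to only one of $\omega_{n}^{\pm}$.) Before the argument can be finished you must redo this step with the correct vanishing loci and verify that the signs in fact reproduce the statement $c_n^{\pm}=\omega_{n}^{\mp}v^{\perp}_{\pm,n}$ of the theorem; in the present form the implication is not established.
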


The following are more details of the structure of $\scr{F}(\fr{m}_{n})^{\pm}$ as a $\Lambda_{\cal{O}, n}$-module.
Recall that we define 
\begin{align}
  \omega_{n}^{+}=\omega_{n}^{+}(\gamma)=\prod_{\substack{1 \leq k \leq n \\ k: \textrm{even}}} \phi_{p^{k}}(\gamma), \quad
  \omega_{n}^{-}=\omega_{n}^{-}(\gamma)=(\gamma -1)\prod_{\substack{1 \leq k \leq n \\ k:\mathrm{odd}}} \phi_{p^{k}}(\gamma),
\end{align}
and $\omega_{0}^{+} = 1$, $\omega_{0}^{-} = \gamma -1$ in the previous section.

\begin{prp}\label{prp:n-th structure}
  We have $\scr{F}(\fr{m}_{n})^{\pm} \simeq \omega_{n}^{\mp} \Lambda_{\cal{O}, n} \simeq \Lambda_{\cal{O}, n}/\omega_{n}^{\pm}\Lambda_{\cal{O}, n}$.
  Furthermore, we have 
  \begin{align}
    \Hom_{\cal{O}}(\scr{F}(\fr{m}_{n})^{\pm} \otimes_{\cal{O}} (\Phi/\cal{O}), \Phi/\cal{O}) \simeq \Lambda_{\cal{O}, n}/\omega_{n}^{\pm}\Lambda_{\cal{O}, n}.
  \end{align}
\end{prp}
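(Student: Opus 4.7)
The plan is to derive both isomorphisms from Theorem \ref{thm:decomposition}(1) together with an explicit annihilator calculation in the group algebra $\Lambda_{\cal{O},n}$. The generation statement provides a surjection $\Lambda_{\cal{O},n} \twoheadrightarrow \scr{F}(\fr{m}_n)^{\pm}$, $1 \mapsto c_n^{\pm} = \omega_n^{\mp} v_{\pm,n}^{\perp}$. By the Rubin conjecture and perfectness of the pairing $(~,~)_{\Lambda_{\cal{O},n}}$, the element $v_{\pm,n}^{\perp}$ is a $\Lambda_{\cal{O},n}$-basis element of the free module $H^{1}(\Psi_n, T_{\scr{F}})$, so the kernel of the above surjection coincides with $\mathrm{Ann}_{\Lambda_{\cal{O},n}}(\omega_n^{\mp})$. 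To compute this annihilator, identify $\Lambda_{\cal{O},n} \simeq \cal{O}[T]/((1+T)^{p^n}-1)$ via $\gamma \mapsto 1+T$. The cyclotomic factorisation gives $\omega_n^{+}\omega_n^{-} = (\gamma-1)\prod_{k=1}^{n}\phi_{p^k}(\gamma) = (1+T)^{p^n}-1$, hence $\omega_n^{\pm}\Lambda_{\cal{O},n} \subseteq \mathrm{Ann}(\omega_n^{\mp})$. Conversely, if $\omega_n^{\mp} x$ vanishes in $\Lambda_{\cal{O},n}$ then a lift yields $\omega_n^{\mp} x = \omega_n^{+}\omega_n^{-} y$ inside the UFD $\cal{O}[T]$; cancelling the nonzero factor $\omega_n^{\mp}$ forces $x \in \omega_n^{\pm}\Lambda_{\cal{O},n}$, so equality holds.

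This already gives $\scr{F}(\fr{m}_n)^{\pm} \simeq \Lambda_{\cal{O},n}/\omega_n^{\pm}\Lambda_{\cal{O},n}$, and the very same annihilator identity applied to the multiplication map $\Lambda_{\cal{O},n} \to \omega_n^{\mp}\Lambda_{\cal{O},n}$, $x \mapsto \omega_n^{\mp} x$, supplies the remaining identification $\omega_n^{\mp}\Lambda_{\cal{O},n} \simeq \Lambda_{\cal{O},n}/\omega_n^{\pm}\Lambda_{\cal{O},n}$.

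For the second assertion, note that $M \coloneq \scr{F}(\fr{m}_n)^{\pm} \simeq \omega_n^{\mp}\Lambda_{\cal{O},n}$ is $\cal{O}$-torsion free, being a submodule of the $\cal{O}$-free algebra $\Lambda_{\cal{O},n}$. For any finitely generated $\cal{O}$-torsion-free module one has a canonical $\Lambda_{\cal{O},n}$-equivariant identification $\Hom_{\cal{O}}(M \otimes_{\cal{O}}(\Phi/\cal{O}), \Phi/\cal{O}) \simeq \Hom_{\cal{O}}(M, \cal{O})$, so it suffices to exhibit an isomorphism $(\Lambda_{\cal{O},n}/\omega_n^{\pm})^{*} \simeq \Lambda_{\cal{O},n}/\omega_n^{\pm}$ of $\Lambda_{\cal{O},n}$-modules. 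This follows from the Frobenius (self-dual) structure of the finite group algebra $\Lambda_{\cal{O},n}$: $\cal{O}$-dualising the short exact sequence $0 \to \omega_n^{\pm}\Lambda_{\cal{O},n} \to \Lambda_{\cal{O},n} \to \Lambda_{\cal{O},n}/\omega_n^{\pm} \to 0$ and using $\Lambda_{\cal{O},n}^{*} \simeq \Lambda_{\cal{O},n}$ realises $(\Lambda_{\cal{O},n}/\omega_n^{\pm})^{*}$ as $\mathrm{Ann}(\omega_n^{\pm}\Lambda_{\cal{O},n}) = \omega_n^{\mp}\Lambda_{\cal{O},n} \simeq \Lambda_{\cal{O},n}/\omega_n^{\pm}$. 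The one delicate point is that this Frobenius isomorphism twists the module action by the involution $\gamma \mapsto \gamma^{-1}$; however each $\phi_{p^k}(\gamma)$ is self-reciprocal up to a unit of $\Lambda_{\cal{O},n}$, so the ideals $\omega_n^{\pm}\Lambda_{\cal{O},n}$ are stable under the involution and the twist is invisible on the quotient.
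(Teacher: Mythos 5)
Your argument is correct and follows essentially the same route as the paper: Theorem \ref{thm:decomposition}(1) together with the identity $c_{n}^{\pm}=\omega_{n}^{\mp}v_{\pm,n}^{\perp}$ reduces the first isomorphism to computing $\mathrm{Ann}_{\Lambda_{\cal{O},n}}(\omega_{n}^{\mp})$, which is exactly what the paper's pairing computation $(c_{n}^{\pm},v_{\mp,n})_{\Lambda_{\cal{O},n}}=\omega_{n}^{\mp}$ encodes; and for the dual statement the paper outsources to \cite[Proposition 4.1]{Kim-Kur21}, whose proof is the same self-duality-of-group-ring argument you spell out. The one point you handle more explicitly than the paper is the involution twist in the Pontryagin dual, and your observation that $\omega_{n}^{\pm}$ is $\iota$-invariant up to a unit (self-reciprocity of $\phi_{p^{k}}$ together with $\iota(\gamma-1)=-\gamma^{-1}(\gamma-1)$) is exactly what makes the twist harmless.
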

\begin{proof}
  The first statement follows from $(c_{n}^{\pm}, v_{\mp,n})_{\Lambda_{\cal{O}, n}} = (\omega_{n}^{\mp}v_{\pm, n}^{\perp}, v_{\mp, n})_{\Lambda_{\cal{O}, n}} = \omega_{n}^{\mp}$ (see \cite[(2.14)]{BKOpre}).
  The last statement is proved as same as \cite[Proposition 4.1]{Kim-Kur21}.
\end{proof}

Next, we define the plus/minus Selmer group of $E$ over the anticyclotomic $\Z_{p}$-extension of $K$.
Let $K_{\infty}/K$ be the anticyclotomic $\Z_{p}$-extension as the previous section, and $K_{n}$ the subfield of $K_{\infty}$ such that the Galois group of $K_{n}/K$ is cyclic with the order $p^{n}$.
Let $\cal{O}_{p}$ be the valuation ring of $K_{p}$.
Suppose that 
\begin{center}
  (cp) $p$ is not divisible the class number $h_{K}$ of $K$.
\end{center}
For any non-negative integer $n$, let $K_{n, p}$ be the completion of $K_{n}$ at $p$.
Note that $K_{p}$ is isomorphic to $\Phi$ because $p$ is inert in $K/\Q$, and $K_{n, p}$ is isomorphic to $\Psi_{n}$ by the assumption (cp).
Let $\Lambda \coloneq \Z_{p}\powser{\Gal(K_{\infty}/K)}$.
Note that $\Lambda_{\cal{O}_{p}}$ is a free $\Lambda$-module of rank 2,
and for a $\Lambda_{\cal{O}_{p}}$-module $M$, $\Hom_{\cal{O}_{p}}(M, K_{p}/\cal{O}_{p})$ can be identified with the Pontryagin dual $M^{\vee}$ of $M$ as a $\Lambda$-module.

Let $E$ be an elliptic curve defined over $\Q$ with the conductor $N_{E}$, and $f \in S_{2}(\Gamma_{0}(N_{E}))^{\text{new}}$ be the weight 2 eigenform corresponding to the isogeny class of $E$.
Assume that $E$ has good supersingular reduction at $p$.
We decompose $N_{E}$ into $N_{E}^{+}$ and $N_{E}^{-}$ similarly, and assume that
\begin{center}
  (def) $N_{E}^{-}$ is a square-free product of the odd number of primes.
\end{center}
Let $\hat{E}$ be the formal group of $E$ defined over $\cal{O}_{p}$.
Note that $\hat{E}$ is isomorphism to the Lubin--Tate formal group $\scr{F}$ because $E/\Q$ has a good supersingular reduction at $p \geq 5$.
Let $T=T_{p}E$ be the $p$-adic Tate module of $E$, $V \coloneq T \otimes \Q_{p}$ and $A \coloneq V/T$.

With the above preparations, we define the plus/minus Selmer group.
For a finite prime $v$ of $K_{n}$, we define the local condition of the plus/minus Selmer group as  
\begin{align}
  H^{1}_{\pm}(K_{n, v}, E[p^{\infty}]) \coloneq \begin{cases}
    E(K_{n, v_{n}}) \otimes (\Q_{p}/\Z_{p}) = 0 & (v \neq p), \\
    \hat{E}(K_{n, p})^{\pm} \otimes (\Q_{p}/\Z_{p}), & (v=p).
  \end{cases}
\end{align}
Then, we define the plus/minus Selmer group of $E$ over $K_{n}$ as 
\begin{align}
  \Sel_{p^{\infty}}^{\pm}(E/K_{n}) \coloneq \Ker \left(
    H^{1}(K_{n}, E[p^{\infty}]) \rarrow \prod_{v} \frac{H^{1}(K_{n, v}, E[p^{\infty}])}{H^{1}_{\pm}(K_{n, v}, E[p^{\infty}])}
  \right)
\end{align}
and $\Sel_{p^{\infty}}^{\pm}(E/K_{\infty}) \coloneq \varinjlim_{n}\Sel_{p^{\infty}}^{\pm}(E/{K_{n}})$. 
In the rest of this chapter, we introduce the previous work for the proof of Theorem \ref{thm:mthm}.

\begin{thm}[{\cite{BBL22}}] \label{thm:cotorsion}
  Let $N_{E}$ be the conductor of $E$, and $D_{K}$ be the discriminant of $K$.
  Assume that $(D_{K}, pN_{E}) = 1$, (cp), (def) and also 
  \begin{itemize}
    \item[(Im)] the mod $p$ Galois representation $G_{\Q} \rarrow \Aut_{\F_{p}}(E[p])$ is surjective if $p=5$, and irreducible if $p > 5$,
    \item[(ram)] for all prime $\ell \mid N_{E}^{-}$ with $\ell^{2} \equiv 1 \bmod p$, the inertia subgroup $I_{\ell} \subset G_{\Q_{\ell}}$ acts non-trivially on $E[p]$.
  \end{itemize}
  Then, $\Sel_{p^{\infty}}^{\pm}(E/K_{\infty})$ is a $\Lambda$-cotorsion, and we have
  \begin{align}
    (L_{p}^{\pm}(E/K_{\infty})) \subset \ch_{\Lambda} \left(\Sel_{p^{\infty}}^{\pm}(E/K_{\infty})^{\vee} \right),
  \end{align}
  where $\ch_{\Lambda}(M)$ is the characteristic ideal for a torsion $\Lambda$-module $M$.
\end{thm}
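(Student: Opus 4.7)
The plan is to run a bipartite Euler system argument in the spirit of Bertolini--Darmon and Howard, adapted to the supersingular--inert setting via the Rubin conjecture (Theorem \ref{Rubin conj}) and the Burungale--Kobayashi--Ota decomposition (Theorem \ref{thm:decomposition}, Proposition \ref{prp:n-th structure}). The overall shape of the argument parallels \cite{Kim18+} in the ordinary case, but with the plus/minus formalism replacing the ordinary filtration of $T$ at $p$.

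\textbf{Step 1 (Algebraic reformulation).} I would first recast the $\pm$-local condition at $p$ as a Greenberg-style rank-one condition. By Proposition \ref{prp:n-th structure}, $\hat{E}(\fr{m}_n)^\pm$ is cyclic over $\Lambda_{\cal{O}_p,n}$ with annihilator $\omega_n^\pm$. Passing to the inverse limit, the local Pontryagin dual of the $\pm$-condition becomes free of rank one over $\Lambda_{\cal{O}_p}$. Combined with the orthogonality $(c_n^\pm, v_{\mp,n})_{\Lambda_{\cal{O}_p,n}} = \omega_n^{\mp}$ and Tate duality, this allows the Poitou--Tate exact sequence for $\Sel_{p^{\infty}}^\pm(E/K_\infty)$ to be organized exactly as in the ordinary-inert case, so that the standard machinery of Greenberg-style Selmer groups applies verbatim at the level of $\Lambda$-module structure.

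\textbf{Step 2 (Bipartite Euler system).} For each integer $n$, one would introduce the class of $n$-admissible primes $\ell$: rational primes inert in $K$, prime to $N_E p D_K$, with $\ell^2 \not\equiv 1 \bmod p^n$, and congruent mod $p^n$ to $f$ in the Kolyvagin sense. Ribet--Taylor level-raising (allowed by (Im) and (ram) via Ihara's lemma) produces a newform $f_\ell$ on the definite quaternion algebra of discriminant $N_E^- \ell$, to which one attaches a Gross-type cohomology class $\kappa(\ell) \in H^1(K, T/p^n)$ and a Bertolini--Darmon element $\theta_\ell \in \Lambda/p^n\Lambda$. Square-free products of admissibles then give a bipartite Euler system in Howard's sense, and the usual "reciprocity laws" relate $\kappa(\ell)$ to $\theta_\ell$ (first reciprocity) and the residue of $\kappa(\ell)$ at the second prime to $\cal{L}_f^\pm$ (second reciprocity, formulated via the BKO pairing and the local points $c_n^\pm$).

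\textbf{Step 3 (Cotorsion and divisibility).} Once these reciprocity laws are in hand, Howard's axiomatic bipartite Euler system machinery directly yields both claims: non-triviality of the system (which follows from non-vanishing of $\cal{L}_f^\pm$ in the style of Chida--Hsieh, combined with the level-raising congruence) forces $\Sel_{p^{\infty}}^\pm(E/K_\infty)^\vee$ to be $\Lambda$-torsion, and the explicit comparison between the length of a finite piece of the dual Selmer group and the $p$-adic valuation of $\cal{L}_f^\pm \cdot \iota(\cal{L}_f^\pm) = L_p^\pm(E/K_\infty)$ at the relevant height-one primes gives the divisibility $(L_p^\pm(E/K_\infty)) \subset \ch_\Lambda(\Sel_{p^{\infty}}^\pm(E/K_\infty)^\vee)$.

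The main obstacle is the \emph{second} reciprocity law at the supersingular inert prime $p$: one must show that the image under localization at $p$ of the Euler system class $\kappa(\ell)$ lands in the $\pm$-part $H^1_{\pm}$ of local cohomology and that its pairing against $v_{\mp,n}$ recovers (up to the expected error factors) the Bertolini--Darmon element $\cal{L}_{f,n}^\pm$ constructed in Section \ref{sect:L-func}. This is precisely where the Rubin decomposition (Theorem \ref{Rubin conj}) and the BKO explicit reciprocity law for the Lubin--Tate formal group $\scr{F}$ enter in an essential way, replacing Kobayashi's cyclotomic $\pm$-theory; the technical work consists in verifying that the trace-compatibility encoded by $\omega_n^\pm$ matches the interpolation property of $\cal{L}_f^\pm$ after projection through the pairing $(-,-)_{\Lambda_{\cal{O}_p, n}}$.
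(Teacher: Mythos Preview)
The paper does not prove this theorem at all: it is quoted verbatim as a result of Burungale--B\"uy\"ukboduk--Lei \cite{BBL22} and used as a black box in the proof of Theorem \ref{thm:mthm}. There is therefore no in-paper proof to compare your proposal against.

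That said, your outline is a reasonable high-level summary of the strategy in \cite{BBL22}: a bipartite Euler system in the Bertolini--Darmon/Howard style, with the ordinary local condition replaced by the Rubin $\pm$-decomposition at the inert supersingular prime, level-raising via (Im) and (ram), and the two reciprocity laws feeding into Howard's machinery. Your identification of the second reciprocity law at $p$ as the crux is accurate; in \cite{BBL22} this is handled via the explicit local theory developed in \cite{BKO21} and \cite{BKOpre}. If your goal is merely to use the theorem in this paper, the citation suffices; if you intend to reproduce the proof, you would need to supply the actual constructions and verifications sketched in Steps 2 and 3 rather than invoke them by analogy.
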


\begin{prp}[{\cite[Corollary 1.4]{shi+}}] \label{prp:Lambda-submod}
  Consider the assumption in Theorem \ref{thm:cotorsion}.
  Then, $\Sel_{p^{\infty}}^{\pm}(E/K_{\infty})$ has no proper $\Lambda$-submodules with finite index.
\end{prp}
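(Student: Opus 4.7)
The plan is to work on the Pontryagin dual side and show instead that $\Sel_{p^{\infty}}^{\pm}(E/K_{\infty})^{\vee}$ has no non-trivial finite $\Lambda$-submodule; since $\Lambda \simeq \Z_{p}\powser{T}$ is one-dimensional regular local, ``finite'' and ``pseudo-null'' coincide, so what we are targeting is a Greenberg-type no-pseudo-null-submodule statement, now adapted to the supersingular and inert setting. My approach combines a Poitou--Tate global duality sequence on the anticyclotomic tower with the explicit $\Lambda_{\cal{O}_{p}, n}$-structure of the plus/minus local points supplied by Proposition \ref{prp:n-th structure}.

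In more detail, first I would write down the Poitou--Tate nine-term exact sequence over $K_{\infty}$, relating $\Sel_{p^{\infty}}^{\pm}(E/K_{\infty})$ to the ``compact'' Selmer group for $T_{p}E$ defined by the orthogonal local conditions. Second, hypothesis (Im) together with the fact that $\Gal(K_{\infty}/\Q)$ is pro-$p$ times a finite group gives $E[p]^{G_{K_{\infty}}}=0$, from which a standard argument forces the Pontryagin dual of $H^{1}(G_{K_{\infty}, S}, E[p^{\infty}])$ to have no non-trivial finite $\Lambda$-submodule. Third, at the bad primes $\ell \mid N_{E}^{-}$ with $\ell^{2}\equiv 1 \bmod p$, hypothesis (ram) gives $E(K_{\infty, v}) \otimes \Q_{p}/\Z_{p}=0$ for $v \mid \ell$, so those local terms contribute nothing problematic; the remaining non-$p$ places are treated similarly using the local Euler factor vanishing $E(K_{\infty, v}) \otimes \Q_{p}/\Z_{p}=0$ in the plus/minus local condition. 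Fourth, at the unique prime of $K$ above $p$, Theorem \ref{thm:decomposition} together with Proposition \ref{prp:n-th structure} identifies $\hat{E}(K_{n, p})^{\pm} \otimes (\Q_{p}/\Z_{p})$ with $\Lambda_{\cal{O}_{p}, n}/\omega_{n}^{\pm}\Lambda_{\cal{O}_{p}, n}$, which in the inverse limit is $\Lambda$-cofree in the appropriate sense, so the local quotient at $p$ carries no finite $\Lambda$-submodule either. A diagram chase through the Poitou--Tate sequence then propagates the absence of finite submodules from the global and local cohomology to $\Sel_{p^{\infty}}^{\pm}(E/K_{\infty})^{\vee}$ itself.

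The hardest part, I expect, is the analysis at $p$: unlike the ordinary case, there is no Greenberg local condition at hand, and the plus/minus condition coming from \cite{BKO21, BKOpre} sits naturally as a $\Lambda_{\cal{O}_{p}}$-module, while $\Lambda_{\cal{O}_{p}}$ is only free of rank $2$ over $\Lambda$. Proposition \ref{prp:n-th structure} pins down the finite-level structure, but extracting the $\Lambda$-cofreeness of the inverse limit that is needed to feed the Poitou--Tate chase, and tracking the compatibilities of the $\omega_{n}^{\pm}$ through the tower, requires a careful dévissage in the spirit of \cite[\S 4]{Kim-Kur21}. The global input from (Im) and the handling of $N_{E}^{-}$ via (ram) are, by contrast, essentially routine.
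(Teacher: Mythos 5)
The paper does not prove this proposition; it is imported verbatim from \cite[Corollary 1.4]{shi+} (a separate paper by the same author), so there is no in-paper argument to compare against. Your outline is nonetheless recognizably the Greenberg--Kitajima--Otsuki--Kim--Kurihara strategy that one would expect such a paper to use: dualize, identify ``no proper finite-index submodule'' with ``the Pontryagin dual has no nonzero finite $\Lambda$-submodule'', feed in $E[p^{\infty}]^{G_{K_{\infty}}}=0$ from (Im), control the local terms via the plus/minus structure of Proposition \ref{prp:n-th structure}, and invoke global duality. That part of the reconstruction is on target, and you are right that the novelty sits at $p$, where the plus/minus condition lives naturally over $\Lambda_{\cal{O}_{p}}$ (free of rank $2$ over $\Lambda$) rather than over $\Lambda$ itself.

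Two points need to be repaired, one small and one more serious. The small one: the vanishing $E(K_{\infty,v})\otimes\Q_{p}/\Z_{p}=0$ for $v\nmid p$ is automatic (Mattuck's theorem --- the local points form a $p$-divisible group at $\ell\neq p$) and has nothing to do with (ram). What (ram) actually buys you is control of $H^{0}(K_{\infty,v},E[p^{\infty}])$ and hence of the \emph{unrestricted} local $H^{1}$ at primes dividing $N_{E}^{-}$, which split completely in $K_{\infty}/K$ and therefore contribute a potentially large induced $\Lambda$-module; this is where (ram) enters the Greenberg-style analysis, not in the statement of the local condition. The more serious gap is the last sentence: the ``diagram chase'' does not propagate ``no finite submodule'' the way you assert. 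After establishing surjectivity of the global-to-local map you get, on duals, a short exact sequence $0\to(\prod_{v}L_{v})^{\vee}\to H^{1}(K_{\Sigma}/K_{\infty},E[p^{\infty}])^{\vee}\to\Sel^{\pm}_{p^{\infty}}(E/K_{\infty})^{\vee}\to 0$, so the Selmer dual is a \emph{quotient} of the global $H^{1}$-dual, and a quotient of a module with no finite submodule can perfectly well have one (e.g.\ $\Lambda\twoheadrightarrow\Lambda/(p,T)$). The actual argument must replace this chase by the sharper input of Greenberg's Propositions 4.14--4.15 (or the plus/minus adaptation in Kitajima--Otsuki and \cite[Proposition 4.3]{Kim-Kur21}): one first establishes surjectivity of the restriction map and the vanishing of $H^{2}(K_{\Sigma}/K_{\infty},E[p^{\infty}])$, computes $\Lambda$-coranks via Euler characteristics to confirm cotorsion, and then uses the cyclicity of $\hat{E}(K_{n,p})^{\pm}$ over $\Lambda_{\cal{O}_{p},n}$ together with the structure theorem to preclude a finite submodule of $\Sel^{\pm}_{p^{\infty}}(E/K_{\infty})^{\vee}$ directly. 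Until that step is made precise, the proposal is a plausible sketch but not yet a proof.
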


By the above proposition and a property of the Fitting ideal (cf. \cite[Lemma A.7]{Kim-Kur21}), we have
\begin{align}
  \Fitt_{\Lambda} \left(\Sel_{p^{\infty}}^{\pm}(E/K_{\infty})^{\vee} \right) = \ch_{\Lambda} \left(\Sel_{p^{\infty}}^{\pm}(E/K_{\infty})^{\vee} \right). \label{eq:Fitt vs char}
\end{align}

Last, we prove the control theorem for $\Sel_{p^{\infty}}^{\pm}(E/K_{\infty})$ in our situation by the similar arguments to \cite{Agb-How05}. 
We prepare the following lemma for the proof.
Let $w$ be a prime of $K_{\infty}$ above $p$.

\begin{lem}\label{lem:lemma of control}
  The natural map $f_{n}^{\pm}:H^{1}_{\pm}(K_{n, p}, E[p^{\infty}]) \rarrow H^{1}_{\pm}(K_{\infty, w}, E[p^{\infty}])[\omega_{n}^{\pm}]$ is injective, and the cokernel of $f_{n}^{\pm}$ is a finite group for any positive integer $n$.
  Here, we define $H^{1}_{\pm}(K_{\infty, w}, E[p^{\infty}]) \coloneq \varinjlim H^{1}_{\pm}(K_{n, p}, E[p^{\infty}])$.
\end{lem}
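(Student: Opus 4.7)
My plan is to compare $f_n^{\pm}$ with the restriction map on the full Galois cohomology via inflation--restriction, promote the resulting finite kernel to zero via divisibility of the source, and bound the cokernel by a snake-lemma argument that leans on Proposition \ref{prp:n-th structure} together with Theorem \ref{thm:decomposition}.

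Set $\Gamma_n \coloneq \Gal(K_{\infty, w}/K_{n, p}) \simeq \Z_p$ and $\omega_n \coloneq \gamma^{p^n} - 1 = \omega_n^+ \omega_n^-$ in $\Lambda_{\cal{O}, n}$. By inflation--restriction, the natural map
\begin{align*}
  g_n \colon H^1(K_{n, p}, E[p^{\infty}]) \rarrow H^1(K_{\infty, w}, E[p^{\infty}])^{\Gamma_n}
\end{align*}
has kernel $H^1(\Gamma_n, E[p^{\infty}]^{G_{K_{\infty, w}}})$ and cokernel $0$ (since $\mathrm{cd}_p(\Z_p) = 1$ kills $H^2(\Gamma_n, -)$ on $p$-primary modules); the kernel is finite because $E[p^{\infty}]^{G_{K_{\infty, w}}}$ is a finite group. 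As $\Gamma_n$ is topologically generated by $\gamma^{p^n}$, one has $H^1(K_{\infty, w}, E[p^{\infty}])^{\Gamma_n} = H^1(K_{\infty, w}, E[p^{\infty}])[\omega_n]$, and the factorisation $\omega_n = \omega_n^+ \omega_n^-$ yields the inclusion $H^1_{\pm}(K_{\infty, w}, E[p^{\infty}])[\omega_n^{\pm}] \subset H^1(K_{\infty, w}, E[p^{\infty}])^{\Gamma_n}$. Proposition \ref{prp:n-th structure} tells us that $H^1_{\pm}(K_{n, p}, E[p^{\infty}])$ is annihilated by $\omega_n^{\pm}$, so $f_n^{\pm}$ is the restriction of $g_n$ to the plus/minus parts inside an obvious commutative square with vertical inclusions.

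Consequently $\Ker f_n^{\pm} \subset \Ker g_n$ is finite; since $H^1_{\pm}(K_{n, p}, E[p^{\infty}]) = \hat{E}(K_{n, p})^{\pm} \otimes_{\Z_p} \Q_p/\Z_p$ is $p$-divisible it contains no nonzero finite subgroup, forcing $\Ker f_n^{\pm} = 0$. For the cokernel, apply the snake lemma to the above square extended at both levels by the short exact sequences $0 \rarrow H^1_{\pm} \rarrow H^1 \rarrow Q \rarrow 0$, with quotients denoted $Q_n$ and $Q_{\infty, n}$; the resulting exact sequence
\begin{align*}
  \Ker g_n \rarrow \Ker(Q_n \rarrow Q_{\infty, n}) \rarrow \Coker f_n^{\pm} \rarrow 0
\end{align*}
reduces the finiteness of $\Coker f_n^{\pm}$ to that of $\Ker(Q_n \rarrow Q_{\infty, n})$. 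Using Theorem \ref{thm:decomposition}, which embeds the complementary $\mp$-part of the formal group into $Q_n$, together with the explicit $\Lambda_{\cal{O}, n}$-structure of Proposition \ref{prp:n-th structure} applied to this $\mp$-part, the latter kernel is finite.

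The main obstacle is this final finiteness, which is essentially a parallel control theorem for the complementary plus/minus component; here Theorem \ref{thm:decomposition} is crucial, since it identifies the relevant portion of $Q_n$ with the $\mp$-part of $\hat{E}$, to which the same cyclic $\Lambda_{\cal{O}, n}$-structure reduces the argument. By comparison, injectivity is essentially formal once $\Ker g_n$ has been shown finite.
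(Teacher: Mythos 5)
Your injectivity argument has a real gap. You reduce to showing that $\Ker f_n^{\pm}$, being a finite subgroup of the $p$-divisible group $\hat{E}(K_{n,p})^{\pm}\otimes\Q_p/\Z_p$, must vanish. But divisible $p$-primary groups certainly contain nonzero finite subgroups (e.g.\ $\tfrac{1}{p}\Z_p/\Z_p \subset \Q_p/\Z_p$), so ``divisible $\Rightarrow$ no finite subgroup'' is false, and the step does not go through. What the paper actually uses is the stronger input $E[p^{\infty}]^{G_{K_{\infty,p}}}=0$ (from \cite[Proposition 3.2]{shi+}), which makes $\Ker g_n = H^1(\Gamma_n, E[p^{\infty}]^{G_{K_{\infty,w}}})$ vanish on the nose by inflation--restriction, so the restricted map $f_n^{\pm}$ is injective outright. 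Your weaker assertion that $E[p^{\infty}]^{G_{K_{\infty,w}}}$ is merely finite is not enough, and the attempted upgrade via divisibility cannot repair it.

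Your cokernel argument also deviates from the paper and is incomplete at the decisive step. The paper compares $\Z_p$-coranks directly: from the Rubin conjecture one has $H^1_{\pm}(K_{\infty,w},E[p^{\infty}])^{\vee}\simeq\Lambda^2$, hence $H^1_{\pm}(K_{\infty,w},E[p^{\infty}])[\omega_n^{\pm}]^{\vee}\simeq\Lambda^2/\omega_n^{\pm}\Lambda^2$, while Proposition~\ref{prp:n-th structure} gives $H^1_{\pm}(K_{n,p},E[p^{\infty}])^{\vee}\simeq\Lambda_n^2/\omega_n^{\pm}\Lambda_n^2$; the two have equal $\Z_p$-ranks, so an injection between the (cofinitely generated) discrete groups has finite cokernel. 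Your snake-lemma route is more convoluted and, more importantly, your final step --- finiteness of $\Ker(Q_n\rarrow Q_{\infty,n})$ --- is not actually covered by your cited inputs: $Q_n = H^1(K_{n,p},E[p^{\infty}])/H^1_{\pm}$ is strictly larger than the complementary local-points piece $\hat{E}(K_{n,p})^{\mp}\otimes\Q_p/\Z_p$ supplied by Theorem~\ref{thm:decomposition}, because it also carries the singular quotient $H^1/H^1_f$. You give no argument controlling that singular part, so the finiteness claim for $\Ker(Q_n\rarrow Q_{\infty,n})$ is left unjustified. I would recommend replacing both steps by the paper's shorter line: invoke $E[p^{\infty}]^{G_{K_{\infty,p}}}=0$ for injectivity, and compare coranks via the explicit $\Lambda$- and $\Lambda_{\cal{O},n}$-module structures for the finiteness of the cokernel.
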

\begin{proof}
  Note that we have $E[p^{\infty}]^{G_{K_{\infty, p}}}=0$ by \cite[Proposition 3.2]{shi+}.
  By Inflation-Restriction exact sequence, we see that the canonical map
  \begin{align}
    f_{n}:H^{1}(K_{n, p}, E[p^{\infty}]) \rarrow H^{1}(K_{\infty, p}, E[p^{\infty}])^{\Gal(K_{\infty, p}/F_{n,p})}=H^{1}(K_{\infty, p}, E[p^{\infty}])[\omega_{n}]
  \end{align}
  is injective.
  Since the map $f_{n}^{\pm}$ is the restriction of $f_{n}$ to $H^{1}_{\pm}(K_{n, p}, E[p^{\infty}])$, $f_{n}^{\pm}$ is also injective.

  For the claim for $\Coker f_{n}^{\pm}$, it suffices to show that both the $\Z_{p}$-coranks of $H^{1}_{\pm}(K_{n, p}, E[p^{\infty}])$ and $H^{1}_{\pm}(K_{\infty, w}, E[p^{\infty}])[\omega_{n}^{\pm}]$ are same. 
  Since we have $H^{1}_{\pm}(K_{\infty, w}, E[p^{\infty}])^{\vee} \simeq \Lambda^{2}$ as $\Lambda$-module by the Rubin conjecture, we see that
  \begin{align}
    H^{1}_{\pm}(K_{\infty, w}, E[p^{\infty}])[\omega_{n}^{\pm}]^{\vee} &\simeq (H^{1}_{\pm}(K_{\infty, w}, E[p^{\infty}]))^{\vee}/\omega_{n}^{\pm}(H^{1}_{\pm}(K_{\infty, w}, E[p^{\infty}]))^{\vee} \\
    & \simeq \Lambda^{2}/\omega_{n}^{\pm}\Lambda^{2}.
  \end{align}
  On the other hand, we have $H^{1}_{\pm}(K_{n, p}, E[p^{\infty}]) = \hat{E}(K_{n,p})^{\pm} \otimes (\Q_{p}/\Z_{p})$ by the definition.
  Thus, we see that $H^{1}_{\pm}(K_{n, p}, E[p^{\infty}])^{\vee} \simeq \Lambda_{n}^{2}/\omega_{n}^{\pm}\Lambda_{n}^{2}$ by Proposition \ref{prp:n-th structure}.
  Therefore, the $\Z_{p}$-rank of $H^{1}_{\pm}(K_{n, p}, E[p^{\infty}])$ and $H^{1}_{\pm}(K_{\infty, w}, E[p^{\infty}])[\omega_{n}^{\pm}]$ are the same.
\end{proof}

\begin{prp} \label{prp:control thm}
  Consider the assumption in Theorem \ref{thm:cotorsion}.
  Then, the canonical homomorphism
  \begin{align}
    \Sel_{p^{\infty}}^{\pm}(E/K_{\infty})[\omega_{n}^{\pm}] \rarrow \Sel_{p^{\infty}}^{\pm}(E/K_{n})[\omega_{n}^{\pm}]
  \end{align}
  is injective, and the order of the cokernel is finite for any $n$.
\end{prp}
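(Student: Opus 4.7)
The plan is to execute the standard snake-lemma control-theorem argument, adapted to the plus/minus anticyclotomic setting as in \cite{Agb-How05} and the cyclotomic analogue of \cite{Kim-Kur21}. Set $\Gamma_{n} \coloneq \Gal(K_{\infty}/K_{n})$. Since $\omega_{n}^{\pm}$ divides $\omega_{n} = \gamma^{p^{n}} - 1$, the $\omega_{n}^{\pm}$-torsion of any $\Lambda$-module is contained in its $\Gamma_{n}$-fixed part. By Proposition 3.2 of \cite{shi+} we have $E[p^{\infty}]^{G_{K_{\infty}}} = 0$, so the inflation-restriction exact sequence yields an isomorphism $H^{1}(K_{n}, E[p^{\infty}]) \simeq H^{1}(K_{\infty}, E[p^{\infty}])^{\Gamma_{n}}$; composing its inverse with the inclusion $H^{1}(K_{\infty}, E[p^{\infty}])[\omega_{n}^{\pm}] \hookrightarrow H^{1}(K_{\infty}, E[p^{\infty}])^{\Gamma_{n}}$ defines the canonical map in the statement (and since the source is $\omega_{n}^{\pm}$-torsion, the image automatically lands in $\Sel_{p^{\infty}}^{\pm}(E/K_{n})[\omega_{n}^{\pm}]$ once it is shown to land in the Selmer group at all).

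The first step will be to form the commutative diagram comparing the defining four-term sequences of $\Sel_{p^{\infty}}^{\pm}(E/K_{\infty})[\omega_{n}^{\pm}]$ and $\Sel_{p^{\infty}}^{\pm}(E/K_{n})[\omega_{n}^{\pm}]$. The global middle vertical arrow is the isomorphism $H^{1}(K_{\infty}, E[p^{\infty}])[\omega_{n}^{\pm}] \simeq H^{1}(K_{n}, E[p^{\infty}])[\omega_{n}^{\pm}]$ coming from inflation-restriction (using that $\omega_{n}^{\pm} \mid \omega_{n}$), and the right vertical arrows are the local restriction maps
\begin{align}
    g_{n, v}: \left( \frac{H^{1}(K_{\infty, v'}, E[p^{\infty}])}{H^{1}_{\pm}(K_{\infty, v'}, E[p^{\infty}])} \right)[\omega_{n}^{\pm}] \rarrow \frac{H^{1}(K_{n, v}, E[p^{\infty}])}{H^{1}_{\pm}(K_{n, v}, E[p^{\infty}])}
\end{align}
for each place $v$ of $K_{n}$, with $v'$ a prime of $K_{\infty}$ above $v$. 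Injectivity of the middle vertical arrow forces injectivity of the leftmost vertical arrow, while a standard diagram chase reduces the finiteness of its cokernel to the finiteness of $\bigoplus_{v} \ker(g_{n, v})$.

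The next step is a place-by-place analysis of these local kernels. At $v = p$, Lemma \ref{lem:lemma of control} (itself relying on the Rubin conjecture, Theorem \ref{Rubin conj}, and Proposition \ref{prp:n-th structure}) supplies exactly the required control of the plus/minus local conditions and implies that $\ker(g_{n, p})$ is finite. At primes $v \nmid pN_{E}$, the representation $E[p^{\infty}]$ is unramified at $v$, and a routine inflation-restriction calculation combined with the finiteness of $E(k_{v})[p^{\infty}]$ forces $\ker(g_{n, v}) = 0$. For the finitely many primes above $\ell \mid N_{E}^{+}$ (which split in $K/\Q$), the contribution is again visibly finite, since only finitely many primes of $K_{\infty}$ lie above each such $\ell$ and the residual local cohomology is finite.

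The main obstacle is the contribution of the bad primes $\ell \mid N_{E}^{-}$, which are inert in $K/\Q$: these split completely in $K_{\infty}/K_{n}$ from some point on, so the local cohomology groups are genuinely infinite a priori. Here I plan to invoke the hypothesis (ram) to guarantee that the inertia subgroup at $\ell$ acts non-trivially on $E[p]$; following the Greenberg--Vatsal/Pollack--Weston strategy reused in \cite{BBL22} and \cite{Kim-Kur21}, this non-triviality forces $H^{1}(K_{n, v}, E[p^{\infty}])$, and hence $\ker(g_{n, v})$, to be finite of order bounded independently of $n$. Summing the finite contributions over the finitely many relevant places will complete the snake-lemma computation.
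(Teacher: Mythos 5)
Your overall scaffolding matches the paper's: set up the two four-term sequences defining $\Sel_{p^{\infty}}^{\pm}$ over $K_{n}$ and $K_{\infty}$, pass to $\omega_{n}^{\pm}$-torsion, use $E[p^{\infty}]^{G_{K_{\infty}}}=0$ plus inflation-restriction to make the middle vertical arrow an isomorphism, apply the snake lemma to get injectivity on the left and reduce to bounding the local kernels, and at $v=p$ invoke Lemma \ref{lem:lemma of control}. All of that is correct and is exactly what the paper does.

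Where you diverge, and where the argument goes astray, is in the treatment of the bad primes away from $p$. You single out $\ell \mid N_{E}^{-}$, note that such $\ell$ split completely in $K_{\infty}/K$, and then claim that the local kernels there can only be controlled by invoking (ram) and a Greenberg--Vatsal/Pollack--Weston--type argument, on the grounds that ``the local cohomology groups are genuinely infinite a priori'' and that (ram) ``forces $H^{1}(K_{n,v}, E[p^{\infty}])$ to be finite.'' Both claims are off. First, $H^{1}(K_{n,v}, E[p^{\infty}])$ is finite for \emph{every} finite prime $v \nmid p$, with or without (ram): the local Euler characteristic formula for $\ell \neq p$ gives $\corank_{\Z_{p}} H^{1}(K_{n,v}, E[p^{\infty}]) = \corank H^{0} + \corank H^{2} = 0$ since $E(K_{n,v})[p^{\infty}]$ is finite and $(T_{p}E)^{G_{K_{n,v}}} = 0$. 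Second, and more to the point, the paper does not invoke (ram) anywhere in the proof of this proposition. It treats all $v \in \Sigma \setminus \{p\}$ uniformly: the local extension $K_{\infty, v_{\infty}}/K_{n, v_{n}}$ is either trivial (the completely split case, which includes all the inert $\ell \mid N_{E}^{-}$ by the anticyclotomic parity argument) or the unramified $\Z_{p}$-extension. In the trivial case $\Ker(g_{n, v_{n}}^{\pm}) = 0$ immediately because the restriction map is the identity; in the unramified $\Z_{p}$-extension case, inflation-restriction bounds $\Ker(g_{n, v_{n}}^{\pm})$ by $\# H^{1}(K_{\infty, v_{\infty}}/K_{n, v_{n}}, B_{v_{\infty}})$, which equals the coinvariants of $B_{v_{\infty}} = E[p^{\infty}]^{G_{K_{\infty, v_{\infty}}}}$ and hence is bounded by $\#\bigl(B_{v_{\infty}}/(B_{v_{\infty}})_{\mathrm{div}}\bigr)$, a quantity that is finite simply because $B_{v_{\infty}}$ is cofinitely generated over $\Z_{p}$. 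So the inert primes, which you flag as the main obstacle, are actually the \emph{easy} case, handled with no additional hypotheses.

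The hypothesis (ram) is indeed part of the ambient assumptions (it appears in Theorem \ref{thm:cotorsion}, which this proposition is stated under), but it is used for the Iwasawa main conjecture inclusion and related Euler system arguments, not for the control theorem. Your proposal would still produce a correct conclusion, since the kernels you are trying to bound are in fact zero, but the mechanism you propose (needing (ram) to control the local term at inert bad primes) misidentifies both where the difficulty lies and what the hypothesis is for. Replacing that paragraph with the observation that complete splitting makes the local restriction an isomorphism, and that the remaining $\Z_{p}$-extension case is controlled by the finite group $B_{v_{\infty}}/(B_{v_{\infty}})_{\mathrm{div}}$, would bring your proof in line with the paper's and remove an unnecessary (and incorrectly justified) appeal to (ram).
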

\begin{proof}
  We take the finite subset $\Sigma = \{ p \} \cup \{ \text{bad primes of $E$} \}$.
  Then, we have the following commutative diagram:
  \begin{align}
    \xymatrix{
      0 \ar[r] & \Sel_{p}^{\pm}(E/K_{n})[\omega_{n}^{\pm}] \ar[r] \ar[d]^{s_{n}^{\pm}} & H^{1}(K_{\Sigma}/K_{n}, E[p^{\infty}])[\omega_{n}^{\pm}] \ar[r]^-{a} \ar[d]^{h_{n}^{\pm}} & \DS \prod_{\substack{v_{n} \mid v \\ v \in \Sigma}} \frac{H^{1}(K_{n, v_{n}}, E[p^{\infty}])[\omega_{n}^{\pm}]}{H^{1}_{\pm}(K_{n, v_{n}}, E[p^{\infty}])} \ar[d]^-{g_{n}^{\pm}=\prod g_{n, v_{n}}^{\pm}}, \\
      0 \ar[r] & \Sel_{p}^{\pm}(E/K_{\infty})[\omega_{n}^{\pm}] \ar[r] & H^{1}(K_{\Sigma}/K_{\infty}, E[p^{\infty}])[\omega_{n}^{\pm}] \ar[r] &\DS \prod_{\substack{v_{\infty} \mid v \\ v \in \Sigma}} \frac{H^{1}(K_{\infty, v_{\infty}}, E[p^{\infty}])[\omega_{n}^{\pm}]}{H^{1}_{\pm}(K_{\infty, v_{\infty}}, E[p^{\infty}])[\omega_{n}^{\pm}]}.
    }
  \end{align}
  Since we have $E[p^{\infty}]^{G_{K_{\infty}}} = 0$, the map $H^{1}(K_{\Sigma}/K_{n}, E[p^{\infty}]) \rarrow H^{1}(K_{\Sigma}/K_{\infty}, E[p^{\infty}])[\omega_{n}]$ induced by the restriction map is isomorphism by the Inflation-Restriction exact sequence.
  Therefore, $h_{n}^{\pm}$ is also isomorphism.
  By the snake lemma, $s_{n}^{\pm}$ is injective, and it suffices to calculate the kernel of $g_{n}^{\pm}$.

  We first consider the case $v \in \Sigma \setminus \{ p \}$.
  Then, $K_{\infty, v_{\infty}}/K_{n, v_{n}}$ is the trivial extension or the unramified $\Z_{p}$-extension.
  If the extension $K_{\infty, v_{\infty}}/K_{n, v_{n}}$ is trivial, then it is clear that $\Ker g_{n, v_{n}}^{\pm} = 0$.
  Assume that $K_{\infty, v_{\infty}}/K_{n, v_{n}}$ is the unramified $\Z_{p}$-extension.
  Write $B_{v_{\infty}} \coloneq E[p^{\infty}]^{G_{K_{\infty, v_{\infty}}}}$.
  We consider the exact sequence
  \begin{align}
    \xymatrix{
      0 \ar[r] & H^{1}(K_{\infty, v_{\infty}}/K_{n,v_{n}}, B_{v_{\infty}}) \ar[r] & H^{1}(K_{n, v_{n}}, E[p^{\infty}]) \ar[r] & H^{1}(K_{\infty, v_{\infty}}, E[p^{\infty}])
    }
  \end{align}
  by the Inflation-Restriction exact sequence.
  Since we assume that $K_{\infty, v_{\infty}}/K_{n,v_{n}}$ is the unramified $\Z_{p}$-extension, $H^{1}(K_{\infty, v_{\infty}}/K_{n,v_{n}}, B_{v_{\infty}})$ is isomorphic to the maximum invariant quotient of $B_{v_{\infty}}$. 
  If we take a topological generator $\gamma_{v_{n}}$ of $\Gal(K_{\infty, v_{\infty}}/K_{n, v_{n}})$, then the maximal divisible subgroup $(B_{v_{\infty}})_{\mathrm{div}}$ is included in $(\gamma_{v_{n}} - 1)B_{v_{\infty}}$.
  Thus, we have
  \begin{align}
    \# \Ker(g_{n,v_{n}}^{\pm}) 
    &\leqq \# H^{1}(K_{\infty, v_{\infty}}/K_{n, v_{n}}, B_{v_{\infty}}) \\
    &= \# (B_{v_{\infty}}/(\gamma_{v_{n}} - 1)B_{v_{\infty}}) \\
    &\leqq \# (B_{v_{\infty}}/(B_{v_{\infty}})_{\mathrm{div}}),
  \end{align}
  that is, the order of $\# \Ker(g_{n,v_{n}}^{\pm})$ is finite for any $n$.

  We next consider the case $v=p$.
  Consider the commutative diagram
  \begin{align}
  \xymatrix{
    0 \ar[r] & H^{1}_{\pm}(K_{n, p}, E[p^{\infty}]) \ar[r] \ar[d]^-{f_{n}^{\pm}} & H^{1}(K_{n, p}, E[p^{\infty}])[\omega_{n}^{\pm}] \ar[r] \ar[d]^{h_{p}^{\pm}} &\DS \frac{H^{1}(K_{n, p}, E[p^{\infty}])[\omega_{n}^{\pm}]}{H^{1}_{\pm}(K_{n, p}, E[p^{\infty}])} \ar[r] \ar[d]^-{g_{n,p}^{\pm}} & 0, \\
    0 \ar[r] & H^{1}_{\pm}(K_{\infty, w}, E[p^{\infty}])[\omega_{n}^{\pm}] \ar[r] & H^{1}(F_{\infty, w}, E[p^{\infty}])[\omega_{n}^{\pm}] \ar[r] &\DS \frac{H^{1}(F_{\infty, w},  E[p^{\infty}])[\omega_{n}^{\pm}]}{H^{1}_{\pm}(K_{\infty, w}, E[p^{\infty}])[\omega_{n}^{\pm}]} \ar[r] & 0.
  }
  \end{align}
  Since $h_{p}^{\pm}$ is isomorphism as Lemma \ref{lem:lemma of control}, we have $\Ker(g_{n,p}^{\pm}) =\Coker(f_{n}^{\pm})$.
  However, $\# \Coker(f_{n}^{\pm})$ is finite by Lemma \ref{lem:lemma of control}.
  Therefore, the order of $\Ker(g_{n,p}^{\pm})$ is also finite for any $n$.
\end{proof}

\section{The proof of Theorem \ref{thm:mthm}}
\noindent
In this section, we complete to prove Theorem \ref{thm:mthm}.
First, the following proposition can be proved from a consequence of previous works.

\begin{prp}\label{prp:conclusion}
  We assume (cp), (def), (Im), and (ram).
  Then, we have 
  \begin{align}
    \left( \omega_{n}^{\mp} \cdot L_{p}^{\pm}(E/K_{\infty}) \bmod \omega_{n} \right) \subset (\omega_{n}^{\mp}) \cdot \Fitt_{\Lambda_{n}}\left( \Sel_{p^{\infty}}^{\pm}(E/K_{n})^{\vee} \right)
  \end{align}
  as an ideal of $\Lambda_{n}$.
\end{prp}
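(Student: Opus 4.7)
The plan is to combine the ``Iwasawa main conjecture'' type divisibility at the infinite level with the control theorem via the factorization $\omega_n = \omega_n^+ \omega_n^-$. Write $X \coloneq \Sel_{p^\infty}^\pm(E/K_\infty)^\vee$ and $Y \coloneq \Sel_{p^\infty}^\pm(E/K_n)^\vee$. First, Theorem \ref{thm:cotorsion} together with \eqref{eq:Fitt vs char} (which upgrades the characteristic ideal to the Fitting ideal via Proposition \ref{prp:Lambda-submod}) gives $L_p^\pm(E/K_\infty) \in \Fitt_\Lambda(X)$. Multiplying by $\omega_n^\mp$ in $\Lambda$, reducing modulo $\omega_n$, and invoking the base-change identity $\Fitt_\Lambda(X) \cdot \Lambda_n = \Fitt_{\Lambda_n}(X/\omega_n X)$, one obtains
\begin{align}
\omega_n^\mp L_p^\pm(E/K_\infty) \bmod \omega_n \ \in\ \omega_n^\mp \cdot \Fitt_{\Lambda_n}(X/\omega_n X).
\end{align}

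The heart of the argument is then to establish the inclusion $\omega_n^\mp \cdot \Fitt_{\Lambda_n}(X/\omega_n X) \subset \omega_n^\mp \cdot \Fitt_{\Lambda_n}(Y)$. The key observation is that $\omega_n = \omega_n^+\omega_n^-$ yields a short exact sequence
\begin{align}
0 \rarrow \omega_n^\pm \Lambda_n \rarrow \Lambda_n \xrightarrow{\,\omega_n^\mp\,} \omega_n^\mp \Lambda_n \rarrow 0,
\end{align}
hence an isomorphism $\omega_n^\mp \Lambda_n \simeq \Lambda_n/\omega_n^\pm \Lambda_n$. Under this isomorphism, for any ideal $I \subset \Lambda_n$ the ideal $\omega_n^\mp I$ of $\omega_n^\mp \Lambda_n$ corresponds to the image of $I$ in $\Lambda_n/\omega_n^\pm \Lambda_n$. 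Applying this to $I = \Fitt_{\Lambda_n}(X/\omega_n X)$ and to $I = \Fitt_{\Lambda_n}(Y)$ and using the base-change property for Fitting ideals once more, the desired inclusion reduces to
\begin{align}
\Fitt_{\Lambda_n/\omega_n^\pm}(X/\omega_n^\pm X) \subset \Fitt_{\Lambda_n/\omega_n^\pm}(Y/\omega_n^\pm Y).
\end{align}

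For this last inclusion I would invoke Pontryagin duality applied to the control theorem (Proposition \ref{prp:control thm}): the injection $\Sel_{p^\infty}^\pm(E/K_n)[\omega_n^\pm] \hookrightarrow \Sel_{p^\infty}^\pm(E/K_\infty)[\omega_n^\pm]$ with finite cokernel dualizes, via the standard identification $(M[\omega_n^\pm])^\vee \simeq M^\vee/\omega_n^\pm M^\vee$, to a $\Lambda_n$-surjection $X/\omega_n^\pm X \twoheadrightarrow Y/\omega_n^\pm Y$, and surjections enlarge Fitting ideals, which gives precisely what is needed. The step I expect to require the most care is the ``dictionary'' set up in the middle paragraph, relating $\omega_n^\mp$-multiples of Fitting ideals over $\Lambda_n$ to Fitting ideals over the quotient $\Lambda_n/\omega_n^\pm$; once this dictionary is in hand, the proof fits together essentially formally from the infinite-level input and the control theorem.
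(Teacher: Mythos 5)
Your proposal is correct and follows essentially the same route as the paper: both start from $L_p^\pm(E/K_\infty) \in \Fitt_\Lambda(\Sel_{p^\infty}^\pm(E/K_\infty)^\vee)$ via Theorem~\ref{thm:cotorsion} and \eqref{eq:Fitt vs char}, pass to the quotient by $\omega_n^\pm$ using base change for Fitting ideals together with the duality $(M[\omega_n^\pm])^\vee \simeq M^\vee/\omega_n^\pm M^\vee$, invoke the control theorem (Proposition~\ref{prp:control thm}) to compare with level $n$, and finally use $\omega_n^+\omega_n^- = \omega_n = 0$ in $\Lambda_n$ to absorb the ambiguity modulo $(\omega_n^\pm)$. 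The only stylistic difference is that you pre-multiply by $\omega_n^\mp$ and encode the reduction via the isomorphism $\omega_n^\mp\Lambda_n \simeq \Lambda_n/\omega_n^\pm\Lambda_n$, whereas the paper works directly with ideals of $\Lambda_n/\omega_n^\pm\Lambda_n$ and multiplies by $\omega_n^\mp$ only at the very last step; these are equivalent bookkeeping conventions.
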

\begin{proof}
  By applying Theorem \ref{thm:cotorsion} and \eqref{eq:Fitt vs char}, we see that
  \begin{align}
    \left( L_{p}^{\pm}(E/K_{\infty}) \right) \subset \Fitt_{\Lambda}(\Sel_{p^{\infty}}^{\pm}(E/K_{\infty})^{\vee}).
  \end{align} 
  Taking the quotient by $\omega_{n}^{\pm}$, we have
  \begin{align}
    \left( L_{p}^{\pm}(E/K_{\infty}) \bmod \omega_{n}^{\pm} \right) 
    \subset \Fitt_{\Lambda_{n}/\omega_{n}^{\pm}\Lambda_{n}} \left( (\Sel_{p^{\infty}}^{\pm}(E/K_{\infty})[\omega_{n}^{\pm}])^{\vee} \right)
  \end{align}
  as ideals of $\Lambda_{n}/\omega_{n}^{\pm}\Lambda_{n}$, respectively.
  By Proposition \ref{prp:control thm}, we see that
  \begin{align}
    \left( L_{p}^{\pm}(E/K_{\infty}) \bmod \omega_{n}^{\pm} \right) \subset \Fitt_{\Lambda_{n}/\omega_{n}^{\pm}\Lambda_{n}} \left( (\Sel_{p^{\infty}}^{\pm}(E/K_{n})[\omega_{n}^{\pm}])^{\vee} \right)
  \end{align}
  as ideals of $\Lambda_{n}/\omega_{n}^{\pm}\Lambda_{n}$, respectively.
  By the base change from $\Lambda_{n}/\omega_{n}^{\pm}\Lambda_{n}$ to $\Lambda_{n}$,  we have
  \begin{align}
    \Fitt_{\Lambda_{n}/\omega_{n}^{\pm}\Lambda_{n}} \left( (\Sel_{p^{\infty}}^{\pm}(E/K_{n})[\omega_{n}^{\pm}])^{\vee} \right)
    =\frac{\Fitt_{\Lambda_{n}}(\Sel_{p^{\infty}}^{\pm}(E/K_{n})^{\vee}) + (\omega_{n}^{\pm})}{(\omega_{n}^{\pm})}.
  \end{align}
  as ideals of $\Lambda_{n}/\omega_{n}^{\pm}\Lambda_{n}$, respectively.
  Therefore, we have 
  \begin{align}
    \left( L_{p}^{\pm}(E/K_{\infty}) \bmod \omega_{n}^{\pm} \right) + (\omega_{n}^{\pm}) 
    \subset \Fitt_{\Lambda_{n}}(\Sel_{p^{\infty}}^{\pm}(E/K_{n})^{\vee}) + (\omega_{n}^{\pm}).
  \end{align}
  as ideals of $\Lambda_{n}$.
  By multiplying $\omega_{n}^{\mp}$, the claim is proved.
\end{proof}

From now, we only focus on the sign of 
\begin{align}
  \left( \omega_{n}^{+} \cdot L_{p}^{-}(E/K_{\infty}) \bmod \omega_{n} \right) \subset (\omega_{n}^{+}) \cdot \Fitt_{\Lambda_{n}}\left( \Sel_{p^{\infty}}^{-}(E/K_{n})^{\vee} \right). \label{eq:conclusion}
\end{align}
By the definition of the (plus/minus) Selmer group, we have the following exact sequence:
\begin{align}
  \left( \frac{\hat{E}(K_{n, p}) \otimes (\Q_{p}/\Z_{p})}{\hat{E}(K_{n, p})^{-} \otimes (\Q_{p}/\Z_{p})} \right)^{\vee} \overset{\iota^{-}}{\rarrow} \Sel_{p^{\infty}}(E/K_{n})^{\vee} \rarrow \Sel_{p^{\infty}}^{-}(E/K_{n})^{\vee} \rarrow 0.
\end{align}
The properties of the Fitting ideal for an exact sequence and a surjective homomorphism imply the inclusions

{\small
\begin{align}
  \Fitt_{\Lambda_{n}}\left(\left( \frac{\hat{E}(K_{n, p}) \otimes (\Q_{p}/\Z_{p})}{\hat{E}(K_{n, p})^{-} \otimes (\Q_{p}/\Z_{p})} \right)^{\vee} \middle/ \Ker \iota^{-}\right) \cdot \Fitt_{\Lambda_{n}}\left( \Sel_{p^{\infty}}^{-}(E/K_{n})^{\vee} \right) \subset \Fitt_{\Lambda_{n}}\left( \Sel_{p^{\infty}}(E/K_{n})^{\vee} \right), \\
  \Fitt_{\Lambda_{n}} \left( \frac{\hat{E}(K_{n, p}) \otimes (\Q_{p}/\Z_{p})}{\hat{E}(K_{n, p})^{-} \otimes (\Q_{p}/\Z_{p})} \right)^{\vee} \subset \Fitt_{\Lambda_{n}} \left(\left( \frac{\hat{E}(K_{n, p}) \otimes (\Q_{p}/\Z_{p})}{\hat{E}(K_{n, p})^{-} \otimes (\Q_{p}/\Z_{p})} \right)^{\vee} \middle/ \Ker \iota^{-}\right). 
\end{align}
}
By Theorem \ref{thm:decomposition} and Proposition \ref{prp:n-th structure}, we can compute
\begin{align}
  \left(\frac{\hat{E}(K_{n, p}) \otimes (\Q_{p}/\Z_{p})}{\hat{E}(K_{n, p})^{-} \otimes (\Q_{p}/\Z_{p})} \right)^{\vee} = \left( \hat{E}(K_{n, p})^{+} \otimes (\Q_{p}/\Z_{p}) \right)^{\vee} \simeq \left( \Lambda_{n}/\omega_{n}^{+} \Lambda_{n} \right)^{\oplus 2}. 
\end{align}
Hence, we see that
\begin{align}
  \Fitt_{\Lambda_{n}} \left( \frac{\hat{E}(K_{n, p}) \otimes (\Q_{p}/\Z_{p})}{\hat{E}(K_{n, p})^{-} \otimes (\Q_{p}/\Z_{p})} \right)^{\vee} &= \Fitt_{\Lambda_{n}}\left( \left( \Lambda_{n}/\omega_{n}^{+}\Lambda_{n} \right)^{\oplus 2} \right) =(\omega_{n}^{+})^{2}
\end{align}
From the above calculations, we have
\begin{align}
  &(\omega_{n}^{+})^{2} \cdot \Fitt_{\Lambda_{n}}(\Sel_{p^{\infty}}^{-}(E/K_{n})^{\vee}) \\ 
  &= \Fitt_{\Lambda_{n}} \left( \frac{\hat{E}(K_{n, p}) \otimes (\Q_{p}/\Z_{p})}{\hat{E}(K_{n, p})^{-} \otimes (\Q_{p}/\Z_{p})} \right)^{\vee} \cdot \Fitt_{\Lambda_{n}}(\Sel_{p^{\infty}}^{-}(E/K_{n})^{\vee}) \\
  &\subset \Fitt_{\Lambda_{n}}\left(\left( \frac{\hat{E}(K_{n, p}) \otimes (\Q_{p}/\Z_{p})}{\hat{E}(K_{n, p})^{-} \otimes (\Q_{p}/\Z_{p})} \right)^{\vee} \middle/ \Ker \iota^{-}\right) \cdot \Fitt_{\Lambda_{n}}\left( \Sel_{p^{\infty}}^{-}(E/K_{n}) \right) \\
  & \subset \Fitt_{\Lambda_{n}} (\Sel_{p^{\infty}}(E/K_{n})^{\vee}).
\end{align}
Multiplying both sides of the inclusion \eqref{eq:conclusion} by $\omega_{n}^{+}$, we see that
\begin{align}
  \left( (\omega_{n}^{+})^{2} \cdot L_{p}^{-}(E/K_{\infty}) \bmod \omega_{n}\right) &\subset (\omega_{n}^{+})^{2} \cdot \Fitt_{\Lambda_{n}} \left( \Sel_{p^{\infty}}^{-}(E/K_{n})^{\vee} \right) \\
  &\subset \Fitt_{\Lambda_{n}} \left( \Sel_{p^{\infty}}(E/K_{n})^{\vee} \right). \label{eq:Selmer}
\end{align}
On the other hand, we have 
\begin{align}
  \left( (\omega_{n}^{+})^{2} \cdot L_{p}^{-}(E/K_{\infty}) \bmod \omega_{n}\right) &= \left( (\omega_{n}^{+})^{2} \cal{L}_{f}^{-} \iota(\cal{L}_{f}^{-}) \bmod \omega_{n} \right) \\
  &= \left( (\omega_{n}^{+})^{2} \cal{L}_{n}^{-} \cdot \iota (\cal{L}_{n}^{-}) \right) \\
  &= (\cal{L}_{f, n} \cdot \iota(\cal{L}_{f, n})) \\
  &= (L_{p}(E/K_{n})).
  \label{eq:pL}
\end{align}
Combining \eqref{eq:Selmer} and \eqref{eq:pL}, the proof of Theorem \ref{thm:mthm} is completed.

\bibliographystyle{plain}
\bibliography{cite}

\begin{thebibliography}{10}

\bibitem{Agb-How05}
Adebisi Agboola and Benjamin Howard.
\newblock Anticyclotomic {I}wasawa theory of {CM} elliptic curves. {II}.
\newblock {\em Math. Res. Lett.}, 12(5-6):611--621, 2005.

\bibitem{Ber-Dar05}
M.~Bertolini and H.~Darmon.
\newblock Iwasawa's main conjecture for elliptic curves over anticyclotomic
  {$\Bbb Z_p$}-extensions.
\newblock {\em Ann. of Math. (2)}, 162(1):1--64, 2005.

\bibitem{BBL22}
Ashay Burungale, K\^{a}z{\i}m B\"{u}y\"{u}kboduk, and Antonio Lei.
\newblock Anticyclotomic {I}wasawa theory of abelian varieties of {$\rm
  GL_2$}-type at non-ordinary primes.
\newblock {\em Adv. Math.}, 439:Paper No. 109465, 2024.

\bibitem{BHKO24+}
Ashay Burungale, Wei He, Shinichi Kobayashi, and Kazuto Ota.
\newblock Hecke $l$-values, definite shimura sets and mod $\ell$ non-vanishing,
  2024.

\bibitem{BKO21}
Ashay Burungale, Shinichi Kobayashi, and Kazuto Ota.
\newblock Rubin's conjecture on local units in the anticyclotomic tower at
  inert primes.
\newblock {\em Ann. of Math. (2)}, 194(3):943--966, 2021.

\bibitem{BKOpre}
Ashay Burungale, Shinichi Kobayashi, and Kazuto Ota.
\newblock $p$-adic {$L$}-functions and rational points on {CM} elliptic curves
  at inert primes.
\newblock {\em Journal of the Institute of Mathematics of Jussieu}, pages
  1--44, 2023.

\bibitem{Dar-Iov08}
Henri Darmon and Adrian Iovita.
\newblock The anticyclotomic main conjecture for elliptic curves at
  supersingular primes.
\newblock {\em J. Inst. Math. Jussieu}, 7(2):291--325, 2008.

\bibitem{Kim18+}
Chan-Ho Kim.
\newblock An anticyclotomic mazur-tate conjecture for modular forms, 2018.

\bibitem{Kim24+}
Chan-Ho Kim.
\newblock On the anticyclotomic mazur-tate conjecture for elliptic curves with
  supersingular reduction, 2024.

\bibitem{Kim-Kur21}
Chan-Ho Kim and Masato Kurihara.
\newblock On the refined conjectures on {F}itting ideals of {S}elmer groups of
  elliptic curves with supersingular reduction.
\newblock {\em Int. Math. Res. Not. IMRN}, (14):10559--10599, 2021.

\bibitem{Kur02}
Masato Kurihara.
\newblock On the {T}ate {S}hafarevich groups over cyclotomic fields of an
  elliptic curve with supersingular reduction. {I}.
\newblock {\em Invent. Math.}, 149(1):195--224, 2002.

\bibitem{Maz-Tat87}
B.~Mazur and J.~Tate.
\newblock Refined conjectures of the ``{B}irch and {S}winnerton-{D}yer type''.
\newblock {\em Duke Math. J.}, 54(2):711--750, 1987.

\bibitem{Ota18}
Kazuto Ota.
\newblock Kato's {E}uler system and the {M}azur-{T}ate refined conjecture of
  {BSD} type.
\newblock {\em Amer. J. Math.}, 140(2):495--542, 2018.

\bibitem{Rub87}
Karl Rubin.
\newblock Local units, elliptic units, {H}eegner points and elliptic curves.
\newblock {\em Invent. Math.}, 88(2):405--422, 1987.

\bibitem{shi+}
Ryota Shii.
\newblock On {$\Lambda$}-submodules with finite index of the plus/minus selmer
  group over anticyclotomic $\mathbb{Z}_{p}$-extension at inert primes.
\newblock {\em Annales math{\'e}matiques du Qu{\'e}bec}, 2025.

\end{thebibliography}
\end{document}